  \space \textsc{mr}\addcolon\space
\newtheoremstyle{plain}
  {3mm}                         
  {3mm}                         
  {\slshape}                    
  {}                            
  {\bfseries}       
  {.}                           
  {.5em}                        
  {}                            
\newtheoremstyle{definition}
  {3mm}                         
  {3mm}                         
  {}                            
  {}                            
  {\bfseries}       
  {.}                           
  {.5em}                        
  {}                            
\theoremstyle{plain}
\newtheorem{theorem}{Theorem}[section]
\newtheorem{lemma}[theorem]{Lemma}
\newtheorem{proposition}[theorem]{Proposition}
\newtheorem{corollary}[theorem]{Corollary}
\theoremstyle{definition}
\newtheorem{definition}[theorem]{Definition}
\newtheorem{remark}[theorem]{Remark}
\newtheorem{example}[theorem]{Example}
\newtheorem{problem}[theorem]{Problem}
\newtheorem{question}[theorem]{Question}
\newcommand{\define}[1]{{\textsl{\textbf{#1}}}}
\title{Algebraic characterizations of some relative notions of size}
\author{Cory Christopherson}
\author{John H.~Johnson Jr.\\
  \small%
  \href{mailto:johnson.5316@osu.edu}{johnson.5316@osu.edu}}
\affil{%
  \small%
  Department of Mathematics\\
  The Ohio State University\\
  Columbus, Ohio
}
\begin{document}
\maketitle

\begin{abstract}
	We obtain algebraic characterizations of relative notions of size in a discrete semigroup that generalize the usual combinatorial notions of syndetic, thick, and piecewise syndetic sets.
	``Filtered'' syndetic and piecewise syndetic sets were defined and applied earlier by  Shuungula, Zelenyuk, and Zelenyuk \cite{Shuungula:2009ty}.
	Other instances of these relative notions of size have appeared explicitly (and more often implicitly) in the literature related to the algebraic structure of the Stone--\v{C}ech compactification. 
	Building on this prior work, we observe a natural duality and demonstrate how these notions of size may be composed to characterize previous notions of size (like piecewise syndetic sets) and serve as a convenient description for new notions of size.
	
	\vspace{2em}
	{\noindent \textbf{Keywords} Stone--\v{C}ech compactification, syndetic sets, thick sets, piecewise syndetic sets, ultrafilters}

	\vspace{1em}
	{\noindent \textbf{Mathematics Subject Classification (2020)} Primary: 54D35, 54D80 Secondary: 22A15, 08A02}

 \end{abstract}
\section{Introduction}
\label{section:introduction}
In this paper, we study certain ``relative notions of size'' that can be succinctly characterized by the algebraic structure of the Stone--\v{C}ech compactification of a discrete semigroup.
All notions of size we consider are ultimately motivated by van der Waerden's theorem on arithmetic progressions~\cite{vanderWaerden1927}.
This well known classical result in Ramsey theory has many equivalent formulations.
But, one early observation (see~\cite[Chapter 33, p.~319]{Soifer2009}) of Kakeya and Morimoto~\cite[Sections~1 and 2]{KAKEYA1930} shows that van der Waerden's theorem can be reformulated to assert that certain ``large'' subsets of positive integers contain arbitrarily long arithmetic progressions:

\begin{theorem}[Reformulation of van der Waerden's theorem]
\label{theorem:vdw}
Let \( A \subseteq \mathbb{N} \) have \define{bounded gaps}, that is, there exists a positive integer \( b \) such that for all positive integers \( x \) we have 
\[
    (\{1, 2, \ldots, b\} + x) \cap A \ne \emptyset.
\]
Then for all positive integers \( k \in \mathbb{N} \) there exist \( a, d \in \mathbb{N} \) with
\[
    \{ a, a+d, a+2d, \ldots, a+(k-1)d \} \subseteq A.
\]
\end{theorem}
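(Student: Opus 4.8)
The plan is to derive this statement from the more familiar \emph{coloring} form of van der Waerden's theorem, which I take as known: for all positive integers $k$ and $r$ there is an integer $N = N(k,r)$ such that for every map $\chi \colon \{1, 2, \ldots, N\} \to \{1, 2, \ldots, r\}$ there exist $p, q \in \mathbb{N}$ with $p + (k-1)q \le N$ and $\chi(p) = \chi(p+q) = \cdots = \chi(p+(k-1)q)$. Because \Cref{theorem:vdw} quantifies over all $k$, it is enough to fix one $k$ and exhibit a single $k$-term progression inside $A$.

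First I would fix $k$, let $b$ be a bound on the gaps of $A$, and set $N := N(k,b)$. Then I would partition an initial segment of $\mathbb{N}$ into $N$ consecutive blocks of length $b$, namely $B_j := \{jb+1,\, jb+2,\, \ldots,\, jb+b\}$ for $j = 1, 2, \ldots, N$. Since $B_j = \{1, 2, \ldots, b\} + jb$ with $jb \ge 1$, the bounded-gaps hypothesis applied to $x = jb$ shows $B_j \cap A \ne \emptyset$; so I may choose $a_j \in B_j \cap A$ and write $a_j = jb + c_j$ with $c_j \in \{1, 2, \ldots, b\}$.

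Next I would feed the resulting $b$-coloring $j \mapsto c_j$ of $\{1, 2, \ldots, N\}$ into van der Waerden's theorem, which produces $p \ge 1$ and $q \ge 1$ with $p + (k-1)q \le N$ and $c_p = c_{p+q} = \cdots = c_{p+(k-1)q} =: c$. For each $i \in \{0, 1, \ldots, k-1\}$ one then computes
\[
  a_{p+iq} \;=\; (p+iq)\,b + c \;=\; (pb+c) + i\,(qb) ,
\]
so the elements $a_p, a_{p+q}, \ldots, a_{p+(k-1)q}$ of $A$ form an arithmetic progression; taking $a := pb + c$ and $d := qb$ gives the conclusion.

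Beyond the appeal to van der Waerden's theorem itself, there is essentially no obstacle; the only point that needs attention is the bookkeeping in the displayed line, where a monochromatic progression among the block \emph{indices} (common difference $q$) becomes a genuine progression of elements of $A$ only after its common difference is rescaled by the block length $b$ (common difference $qb$) and its starting term is shifted by the color $c$. One could also run the argument through the algebraic structure of $\beta\mathbb{N}$ --- the very framework developed in this paper --- but the elementary counting above is the most transparent route to the statement as given.
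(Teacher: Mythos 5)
Your derivation is correct: the block partition $B_j=\{jb+1,\ldots,jb+b\}=\{1,\ldots,b\}+jb$ is exactly the form to which the bounded-gaps hypothesis applies (with $x=jb\ge 1$), the induced $b$-coloring $j\mapsto c_j$ of $\{1,\ldots,N(k,b)\}$ is well defined, and the rescaling bookkeeping $a_{p+iq}=(pb+c)+i(qb)$ turns the monochromatic index progression into a genuine $k$-term progression in $A$ with $a=pb+c\ge 1$ and $d=qb\ge 1$.

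For the comparison you asked about: the paper supplies no proof of \cref{theorem:vdw}. It states the result as a known reformulation, attributes it to Kakeya and Morimoto with a pointer to Soifer's book, and the surrounding text only remarks that an algebraic proof of van der Waerden's theorem can be extracted from the structure of minimal left ideals in $\beta\mathbb{N}$ following Bergelson, Furstenberg, Hindman, and Katznelson. So your argument is not the paper's proof but a proof the paper omits, and your route is the elementary one: you reduce the bounded-gaps form to the coloring form by block partitioning, whereas the route the paper gestures at would derive both forms simultaneously from the fact that every minimal idempotent lies in the closure of any syndetic set. Your approach has the advantage of being self-contained modulo the finitary van der Waerden theorem and needing none of the $\beta\mathbb{N}$ machinery that the paper later develops; the algebraic approach, by contrast, yields the stronger ``syndetic sets contain arbitrarily long APs'' statement in arbitrary semigroup settings and fits the heuristic the introduction is illustrating, but it is a far heavier hammer for this one fact. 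Either is acceptable; yours is the more transparent choice for the statement as isolated here.
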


This reformulation is important since it's the first \emph{documented} example (known to the authors) of a fundamental heuristic that underlies a significant proportion of current research in Ramsey theory.
Roughly stated, this heuristic asserts that underlying many Ramsey theoretic phenomena is at least one \textbf{notion of size} (for instance, bounded gaps) which contains enough ``\textbf{structure}'' to imply an interesting (\textbf{combinatorial}) \textbf{pattern} (for instance, arbitrarily long arithmetic progressions).
In \cref{theorem:vdw} one can use the structure of minimal left ideals in the Stone--\v{C}ech compactification, as first shown by Bergelson, Furstenberg, Hindman, and Katznelson \cite{Bergelson:1989aa}, to obtain an algebraic proof of van der Waerden's theorem. 
Often the most difficult part of this trio is to identify the right structures to leverage to deduce the combinatorial consequence.

\begin{remark}
\label{remark:ramsey-theory-heuristic}
Historically, however, the dissemination of the ``Ramsey theory heuristic'' was more directly influenced from developments surrounding the Erd\H{o}s and Turán Conjecture \cite{Erdos1936} leading up to Szemerédi's Theorem \cite{Szemeredi1975} and beyond.
Tao's essay \cite{Tao2007a} provides a nice brief nontechnical overview on how Szemerédi's Theorem, and especially its proofs, have influenced and motivated a lot of work in Ramsey theory and its applications.
Additionally, Bergelson's survey article \cite{MR2757532} provides several examples and theorems demonstrating the effectiveness of approaching Ramsey theory from this heuristic.    
\end{remark}

\subsection*{Three notions of size in semigroups}
\label{section:notion-of-size-semigroups}
In general, given a subset \( A \) of a (for us, infinite) semigroup \( S \), we can classify \( A \) as ``large'' in several different ways.
A recent paper of Hindman~\cite{Hindman2019a} surveys 52 notions of size which have interesting connections to either Ramsey theory, dynamics, or the algebraic structure of the Stone--\v{C}ech compactification of a discrete semigroup. 
However in this paper we'll consider far fewer notions~---~essentially \emph{only three}.

For an arbitrary semigroup a set with bounded gaps is not guaranteed to make sense because most semigroups don't have a natural ordering. 
But, we can capture the essential properties of bounded gaps via the notion of ``syndetic''.
In the following definition, and in the rest of this paper, given a set \( X \) we let \( \mathcal{P}_f(X) \) denote the collection of all nonempty finite subsets of \( X \); and,
if \( (S, \cdot) \) is a semigroup, \( A \subseteq S \), and \( x \in S \) we define \( x^{-1}A = \{ y \in S : x \cdot y \in A \} \).
Beside defining syndetic, we also introduce two more notions closely related to syndetic sets:
\begin{definition}
\label{definition:syndetic-and-thick}
Let \( (S, \cdot) \) be a semigroup and let \( A \subseteq S \).
\begin{itemize}
    \item[(a)]
    We call \( A \) \define{syndetic} if and only if there exists \( H \in \mathcal{P}_f(S) \) such that \( \bigcup_{h \in H} h^{-1}A = S \).
    We let \( \mathsf{Syn} \) denote the collection of all syndetic subsets of \( S \).

    \item[(b)]
    We call \(A \) \define{thick} if and only if for all \( H \in \mathcal{P}_f(S) \) we have \( \bigcap_{h \in H} h^{-1}A \ne \emptyset \).
    We let \( \mathsf{Thick} \) denote the collection of all thick subsets of \( S \).
    
    \item[(c)]
    We call \( A \) \define{piecewise syndetic} if and only if there exist \( B \in \mathsf{Syn} \) and \( C \in \mathsf{Thick} \) such that \( A = B \cap C \).
    We let \( \mathsf{PS} \) denote the collection of all piecewise syndetic subsets of \( S \).
\end{itemize}
\end{definition}

We leave it as an exercise to verify, in \( (\mathbb{N}, +) \) syndetic sets are precisely those sets with bounded gaps.
It's also helpful to keep in mind that, again in \( (\mathbb{N}, +) \), a set is thick if and only if it contains arbitrarily long blocks of consecutive positive integers, and a set is piecewise syndetic if and only if there exists a fixed bound such that the set contains arbitrarily long subsets of positive integers whose gaps are no bigger than the fixed bound.
Furstenberg in \cite[Definition~1.11]{Furstenberg1981a} defined piecewise syndetic sets, in \( \mathbb{N} \) or \( \mathbb{Z} \), as the intersection of a syndetic and thick set (and also noted the equivalence we just stated above).

In \cref{section:preliminaries}, we'll give a brief review of the algebraic structure of the Stone--\v{C}ech compactification \( (\beta S, \cdot) \) of a discrete semigroup \( (S, \cdot) \), but, for now, we'll simply note that all three notions have succinct characterizations in terms of this algebraic structure \cite[Theorems~4.48 and 4.40]{Hindman:2012tq}:
\begin{theorem}
    \label{theorem:algebraic-characterizations}
    Let \( (S, \cdot) \) be a semigroup.
    \begin{itemize}
        \item[(a)]
        \( \mathsf{Syn} = \{ A \subseteq S : \text{ for every minimal left ideal } L \text{ of } \beta S \text{ we have } L \cap c\ell_{\beta S}(A) \ne \emptyset \} \). 
        
        \item[(b)]
        \( \mathsf{Thick} = \{ A \subseteq S : \text{ there exists a minimal left ideal } L \text{ of } \beta S \text{ such that } L \subseteq c\ell_{\beta S}(A) \} \). 
        
        \item[(c)]
        \( \mathsf{PS} = \{ A \subseteq S : \text{ there exists a minimal left ideal } L \text{ of } \beta S \text{ such that } L \cap c\ell_{\beta S}(A) \ne \emptyset \} \).         
    \end{itemize}
\end{theorem}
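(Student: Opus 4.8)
The plan is to prove (b) first, deduce (a) from it by a short complementation argument, and then derive (c) from (b) with the help of a compactness argument. The only inputs I would use are the standard features of \( (\beta S,\cdot) \) recalled in \cref{section:preliminaries}: points of \( \beta S \) are ultrafilters on \( S \); the sets \( \overline{A}:=c\ell_{\beta S}(A)=\{p\in\beta S : A\in p\} \) for \( A\subseteq S \) form a basis of clopen sets; right translation \( \rho_p\colon q\mapsto q\cdot p \) is continuous on \( \beta S \) while left translation \( \lambda_s\colon q\mapsto s\cdot q \) is continuous for each \( s\in S \); membership in a product satisfies \( A\in p\cdot q \) iff \( \{x\in S : x^{-1}A\in q\}\in p \); every left ideal of \( \beta S \) contains a minimal left ideal, and minimal left ideals are closed; and \( K(\beta S) \), the smallest two-sided ideal, is the union of all minimal left ideals. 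I would also use two elementary identities: \( h^{-1}(S\setminus A)=S\setminus h^{-1}A \), so \( \overline{S\setminus A}=\beta S\setminus\overline{A} \); and, for \( h\in S \), \( A\in h\cdot p \iff h^{-1}A\in p \), so \( \overline{h^{-1}A}=\lambda_h^{-1}(\overline{A})=\{q\in\beta S : h\cdot q\in\overline{A}\} \); together with the fact that \( \overline{\bigcup_{h\in H}h^{-1}A}=\bigcup_{h\in H}\overline{h^{-1}A} \) for finite \( H \).

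For (b) I would argue as follows. If \( L\subseteq\overline{A} \) is a left ideal, fix \( p\in L \); then \( h\cdot p\in L\subseteq\overline{A} \) for every \( h\in S \), so \( h^{-1}A\in p \) for all \( h \), hence \( \bigcap_{h\in H}h^{-1}A\in p \) is nonempty for every \( H\in\mathcal{P}_f(S) \), i.e.\ \( A \) is thick. Conversely, if \( A \) is thick then \( \{h^{-1}A : h\in S\} \) has the finite intersection property and so lies in some \( p\in\beta S \); then \( \{x\in S : x^{-1}A\in p\}=S \), so \( A\in q\cdot p \) for every \( q\in\beta S \), which says \( \beta S\cdot p\subseteq\overline{A} \); as \( \beta S\cdot p \) is a left ideal, it contains a minimal one, and that minimal left ideal lies inside \( \overline{A} \). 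For (a), note that \( A \) is syndetic iff \( S\setminus A \) is not thick: this is just the complement of the defining equation \( \bigcup_{h\in H}h^{-1}A=S \), via \( S\setminus\bigcup_{h\in H}h^{-1}A=\bigcap_{h\in H}h^{-1}(S\setminus A) \). Applying (b) to \( S\setminus A \) and using \( \overline{S\setminus A}=\beta S\setminus\overline{A} \), non-thickness of \( S\setminus A \) says precisely that no minimal left ideal lies inside \( \beta S\setminus\overline{A} \), that is, every minimal left ideal meets \( \overline{A} \), which is (a).

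For (c), the forward direction reuses (b): if \( A \) is piecewise syndetic, pick \( H \) with \( B:=\bigcup_{h\in H}h^{-1}A \) thick; by (b) some minimal left ideal \( L \) satisfies \( L\subseteq\overline{B}=\bigcup_{h\in H}\overline{h^{-1}A} \), and choosing any \( p\in L \) we get \( h\in H \) with \( h^{-1}A\in p \), hence \( h\cdot p\in\overline{A} \); since \( h\cdot p\in\beta S\cdot p\subseteq L \), the ideal \( L \) meets \( \overline{A} \). For the converse, suppose a minimal left ideal \( L \) meets \( \overline{A} \), say \( p\in L\cap\overline{A} \). For each \( q\in L \), minimality gives \( \beta S\cdot q=L \), so \( p=r\cdot q \) for some \( r\in\beta S \); from \( A\in p=r\cdot q \) the set \( \{s\in S : s^{-1}A\in q\} \) is nonempty, so there is \( s_q\in S \) with \( q\in\lambda_{s_q}^{-1}(\overline{A})=\overline{s_q^{-1}A} \). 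Then \( \{\lambda_s^{-1}(\overline{A}) : s\in S\} \) is an open cover of the compact set \( L \), so a finite subcover is indexed by some \( H\in\mathcal{P}_f(S) \) with \( L\subseteq\bigcup_{h\in H}\overline{h^{-1}A}=\overline{\bigcup_{h\in H}h^{-1}A} \); by (b) the set \( \bigcup_{h\in H}h^{-1}A \) is thick, so \( A \) is piecewise syndetic.

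The only step that is not bookkeeping is the converse half of (c): the translates \( s_q \) produced for the various points \( q\in L \) a priori form an infinite set, and the point is to replace them by a single finite \( H \) working for every \( q\in L \) simultaneously. This is exactly where I would lean on the compactness of the minimal left ideal \( L \) (a closed subset of \( \beta S \)) together with the continuity of the left translations \( \lambda_s \). Once the structural facts about minimal left ideals and \( K(\beta S) \) from \cref{section:preliminaries} are in hand, the remaining manipulations are routine ultrafilter arithmetic.
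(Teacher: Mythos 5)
Your argument is correct, and since the paper merely cites this result (from Hindman--Strauss, Theorems~4.48 and~4.40) without reproducing a proof, the right comparison is to that reference; your approach is essentially the standard one found there. A couple of small observations worth noting: all three steps go through exactly as you describe. In (b), the reverse direction correctly builds an ultrafilter $p$ extending the FIP family $\{h^{-1}A : h \in S\}$ and then observes $\beta S \cdot p \subseteq \overline{A}$, so any minimal left ideal inside $\beta S \cdot p$ works. In (a), the duality $A \in \mathsf{Syn} \iff S\setminus A \notin \mathsf{Thick}$, together with $\overline{S\setminus A} = \beta S \setminus \overline{A}$ and minimal left ideals being closed, gives the characterization cleanly; this is the same duality the paper later writes as $\mathsf{Syn} = \mathsf{Thick}^*$. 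In the converse of (c), your use of minimality to write $\beta S \cdot q = L$ for each $q \in L$, extracting $s_q$ with $q \in \overline{s_q^{-1}A} = \lambda_{s_q}^{-1}(\overline{A})$, and then compactness of $L$ to pass to a finite subcover is precisely the crux move the reference also relies on. No gaps.
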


\subsection*{Organization of article}
\label{section:organization}
The goal of this paper is to define and study ``relative'' notions of syndetic, thick, and piecewise syndetic sets by investigating how these notions ``compose'' with each other and prove algebraic characterizations of these relative notions that generalize \cref{theorem:algebraic-characterizations}.

\paragraph{Relative notions in the literature}``Filtered'' notions of syndetic and piecewise syndetic sets were previously defined and considered by Shuungula, Zelenyuk, and Zelenyuk \cite{Shuungula:2009ty}.
Their paper, and a related older paper of Davenport \cite{Davenport:1990wq}, both form the starting part for our own investigations.
(In a sense, this paper can be partially viewed as a synthesis of their work.)
``Filtered'' notions of thick sets have also appeared implicitly in much of the literature related to the algebraic structure of the Stone--\v{C}ech compactification.
And, in special cases, appeared more or less explicitly in the context of `finite embeddability' by Blass and Di Nasso and Baglini \cite{Blass2016, LuperiBaglini2014, LuperiBaglini2016} and in a note of Protasov and Slobodianiuk \cite{Protasov:2015uz}.
We also note that Zucker \cite{Zucker2017} considers some related ideas in the context of a different generalization of syndetic, thick, and piecewise syndetic sets.

In \cref{section:preliminaries}, we \hyperref[prop:mesh-operator]{state standard and known results} in \cref{prop:mesh-operator} and \hyperref[definition:mesh-operator]{fix some notation} and \hyperref[definition:stacks-filters-grills-ultrafilters]{terminology}. 
(As we note in \cref{remark:origin-of-nonstandard-terminology} and \cref{remark:mesh-operator} some of the notation and terminology we introduce, while previously appearing in the literature, is not standardized.
But, our choices are suitable and flexible for our purpose.)
We also end this section with a \hyperref[section:algebraic-review]{brief review of the algebraic structure of the Stone--\v{C}ech compactification}. 

In \cref{section:relative-syndetic-and-thick}, we give the \hyperref[definition:relative-syndetic-and-thick]{definitions for relative syndetic and thick sets}, \hyperref[proposition:duality-principle]{observe a duality between them}, \hyperref[proposition:characterization-of-PS-as-broken-syndetic]{illustrate how these notions can be ``composed''}, and prove, in \cref{lemma:algebraic-characterizations}, \hyperref[lemma:algebraic-characterizations]{a result that algebraically characterizes a large number of these notions}.
As \hyperref[corollary:combinatorial-characterizations]{one (almost immediate) consequence of this lemma}, we obtain \hyperref[corollary:combinatorial-characterizations]{combinatorial characterizations of closed subsemigroups, closed left ideals, and closed right ideals of \( \beta S \)}.
These latter combinatorial results were previously obtained by Davenport \cite{Davenport:1990wq} and, independently, by Papazyan \cite{Papazyan1990}.

In \cref{section:relative-piecewise-syndetic}, we \hyperref[definition:piecewise-FG-syndetic]{define relative piecewise syndetic sets} and immediately \hyperref[theorem:brown-lemma-relative-piecewise-syndetic]{observe this notion is `partition regular'} in \cref{theorem:brown-lemma-relative-piecewise-syndetic}.
Our definition of relative piecewise syndetic is different from the one defined earlier by Shuungula, Zelenyuk, and Zelenyuk \cite{Shuungula:2009ty}.
We show that our \hyperref[corollary:relative-PS-union-are-thick]{definition satisfies their notion of relative piecewise syndetic}.
However, since we've been unable to verify the converse, we end this section with an \hyperref[question:converse-PS]{open question asking if these two definitions, in fact, coincide}. 

We also note that in several of our proofs below given three (or more) statements \( P \), \( Q \), and \( R \), following the usual mathematical practice, when we write ``\( P \iff Q \iff R \)'' we mean ``\( (P \iff Q) \) and \( (Q \iff R) \)''.%

\section{Preliminaries: Notions of size and the Stone--\v{C}ech compactification}
\label{section:preliminaries}
Instead of considering a single ``large'' set, it's usually convenient to consider a collection of all large subsets, as we've done in \cref{definition:syndetic-and-thick} via \( \mathsf{Syn} \), \( \mathsf{Thick} \), and \( \mathsf{PS} \).
Moreover, it's reasonable to assume that such a collection satisfies some minimal requirements.
(For example, any such collection should be nonempty and not contain the empty set.)
To this end, we introduce some terminology defined using four conditions, that, in a sense, axiomatizes ``notions of size'' in set-theoretic terms: 
\begin{definition}
	\label{definition:stacks-filters-grills-ultrafilters}
    Let \( X \) be a nonempty set and let \( \mathcal{F} \subseteq \mathcal{P}(X) \).
    \begin{itemize}
        \item[(a)]  
         We call \( \mathcal{F} \) a \define{stack on \( X \)} if and only if \( \mathcal{F} \) satisfies two conditions:
         \begin{itemize}
             \item[(1)] 
             \( \emptyset \ne \mathcal{F} \) and \( \emptyset \not\in \mathcal{F} \) and
             \item[(2)] 
             \( A \in \mathcal{F} \) and \( A \subseteq B \subseteq X \) implies \( B \in \mathcal{F} \).
         \end{itemize}

    \item[(b)] 
      We call \( \mathcal{F} \) a \define{filter on \( X \)} if and only if \( \mathcal{F} \) is a stack and \( \mathcal{F} \) satisfies
      \begin{itemize}
        \item[(3)] 
         \( A \in \mathcal{F} \mbox{ and } B \in \mathcal{F} \text{ implies } A \cap B \in \mathcal{F} \).
      \end{itemize}
            
    \item[(c)] 
      We call \( \mathcal{F} \) a \define{grill on \( X \)} if and only if \( \mathcal{F} \) is a stack and \( \mathcal{F} \) satisfies
      \begin{itemize}
        \item[(4)] 
         \( A \cup B \in \mathcal{F} \mbox{ implies } A \in \mathcal{F} \mbox{ or } B \in \mathcal{F} \).
      \end{itemize}
      
    \item[(d)] 
      We call \( \mathcal{F} \) an \define{ultrafilter on \( X \)} if and only if \( \mathcal{F} \) is both a filter and a grill.

    \end{itemize}
\end{definition}

\begin{remark}[\textit{Origin of some of the nonstandard terminology}]
  \label{remark:origin-of-nonstandard-terminology}
  We take ``stack'' from ``\textit{Stapel}'' of \cite[p.~321]{grimeisen_gefilterte_1960} since it is relatively short and somewhat descriptive.
  Stacks are also called, 
  especially in the dynamics literature,
  ``(Furstenberg) families''.
  (For example, see \cite[Introduction and Chapter~2]{Akin1997}.)
  
  The terms ``filter'' and ``ultrafilter'' are both well known and completely standard, 
  but the term ``grill'' seems like a reasonable standard but is not, perhaps, well known.
  This latter term we take from ``\textit{grille}'' in \cite{choquet_sur_1947}.
\end{remark}

From \cref{definition:syndetic-and-thick} it's easy to verify \( \mathsf{Syn} \), \( \mathsf{Thick} \), and \( \mathsf{PS} \) are all stacks on \( S \).
Simple examples in \( (\mathbb{N}, +) \) show that neither the collections \( \mathsf{Syn} \) or \( \mathsf{Thick} \) are guaranteed to be either a filter or grill, however \( \mathsf{PS} \) \emph{is a grill}.

Showing that \( \mathsf{PS} \) is a grill is not trivial (but not too hard either).
In fact, the assertion that for the semigroup \( (\mathbb{N}, +) \) the collection \( \mathsf{PS} \) is a grill is (commonly referred to as) Brown's lemma \cite[Lemma~1]{Brown1971}.
We refer you to \cite[Section~2]{Hindman2019a} for more details on the historical appearance of this notion.
In contrast to the algebraic \cite[a direct consequence of Theorem~4.40]{Hindman:2012tq}, the combinatorial  \cite[Theorem~2.5]{Bergelson1998}, and the dynamical  \cite[Theorem~1.24]{Furstenberg1981a} proofs that \( \mathsf{PS} \) is a grill, we'll provide another combinatorial proof of this fact in \cref{corollary:brown-lemma}.

The reason we introduce this additional terminology is that our point-of-view will be to think of stacks, filters, grills, and ultrafilters as each describing a different aspect of a notion of size.
Moreover, all four notions are connected by a certain operator on \( \mathcal{P}(\mathcal{P}(X)) \):
\begin{definition}
  \label{definition:mesh-operator}
  Let \( X \) be a nonempty set and let \( \mathcal{F} \subseteq \mathcal{P}(X) \).
  The \define{mesh of \( \mathcal{F} \)} is \( \mathcal{F}^* = \{ A \subseteq X : X \setminus A \not\in \mathcal{F} \} \).
\end{definition}

\begin{remark}[\textit{No standard term for mesh operator}]
  \label{remark:mesh-operator}
  There is no standard nor well known terminology for what we call the ``mesh operator''.
  Perhaps the closest attempt, from which we derive our terminology, is ``\textit{Verzahnung}'' in \cite[Kapitel~II]{Schmidt:1953gm}.
  (Actually Schmidt \cite[Kapitel~II]{Schmidt:1953gm} defines his mesh operator as \( \mathcal{F}^* = \{ A \subseteq X : (\forall B \in \mathcal{F})\; A \cap B \ne \emptyset \}\). 
  When \( \mathcal{F} \) is a stack, both these definitions coincide (see \cref{prop:mesh-operator}(d).)  
  In the dynamics literature, \( \mathcal{F}^* \) is called a ``dual family'', and if \( \mathcal{F} \) is a filter, \( \mathcal{F}^* \) is called a \textit{filterdual} (see \cite[Chapter~2]{Akin1997}).
\end{remark}

As an example of using the mesh operator, first observe that there is a duality between syndetic and thick: \( A \subseteq S \) is syndetic if and only if \( S \setminus A \) is not thick.
This duality, using the mesh operator, can also be written as \( \mathsf{Syn} = \mathsf{Thick}^* \).
While this may seem like a triviality, we'll soon see this duality is a fundamental fact that helps show \( \mathsf{PS} \) is a grill (see \cref{corollary:brown-lemma} and its use of \cref{prop:mesh-operator}(h)).

The following proposition states some of the fundamental properties of the mesh operator.
A significant subset of these statements was noted at least as early as Choquet \cite{choquet_sur_1947}.
Schmidt \cite[Kapitel~II]{Schmidt:1953gm} also proves a significant subset of these statements.
A more contemporary reference for all of these statements is Akins \cite[Propositions~2.1, 2.2, and 2.3]{Akin1997}.
The reader can easily verify the following proposition, but, for completeness and convenience, we'll include the proof. 

\begin{proposition}
  \label{prop:mesh-operator}
  Let \( \mathcal{F} \), \( \mathcal{F}_1 \), and \( \mathcal{F}_2 \) be stacks on \( X \).
  \begin{itemize}
    \item[(a)] \( \mathcal{F}^* \) is a stack on \( X \).   
    \item[(b)] \( \mathcal{F} = (\mathcal{F}^*)^* \).
    \item[(c)] \( \mathcal{F}_1 \subseteq \mathcal{F}_2 \) if and only if \( \mathcal{F}_2^* \subseteq \mathcal{F}_1^* \).
    \item[(d)] \( \mathcal{F} = \{ A \subseteq X : (\forall B \in \mathcal{F}^*) \; A \cap B \ne \emptyset \} \). 
    \item[(e)] \( \mathcal{F} \) is a filter if and only if \( \mathcal{F}^* \) is a grill.
    \item[(f)] If \( \mathcal{F} \) a filter, then \( \mathcal{F} \) is an ultrafilter if and only if \( \mathcal{F} = \mathcal{F}^* \).
    \item[(g)] If \( \mathcal{F} \) a filter and \( p \) is an ultrafilter, then \( \mathcal{F} \subseteq p \) if and only if \( p \subseteq \mathcal{F}^* \).
    \item[(h)] The collection \( \{ B \cap C : B \in \mathcal{F} \mbox{ and } C \in \mathcal{F}^* \} \) is a grill on \( X \) that contains \( \mathcal{F} \) and  \( \mathcal{F}^* \).
  \end{itemize}
\end{proposition}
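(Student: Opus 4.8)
The plan is to dispatch (a)--(d) by routine complementation bookkeeping, read off (e) from de Morgan's laws, derive (f) and (g) formally from (b), (c), and (e), and then concentrate the real work on (h).

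For (a) I would observe that $X\in\mathcal F^*$ (since $X\setminus X=\emptyset\notin\mathcal F$), that $\emptyset\notin\mathcal F^*$ (since $X\setminus\emptyset=X\in\mathcal F$, as $\mathcal F$ is a nonempty upward closed family), and that upward closure of $\mathcal F^*$ is just the contrapositive of upward closure of $\mathcal F$ applied to complements. Part (b) is the computation $A\in(\mathcal F^*)^*\iff X\setminus A\notin\mathcal F^*\iff X\setminus(X\setminus A)=A\in\mathcal F$. For (c), the forward implication is again the contrapositive of $\mathcal F_1\subseteq\mathcal F_2$ on complements, and the converse follows by applying that implication to $\mathcal F_2^*\subseteq\mathcal F_1^*$ together with (b). Part (d) I would then deduce from (a) and (b): since $\mathcal F^*$ is an upward closed stack, $X\setminus A\notin\mathcal F^*$ exactly when no member of $\mathcal F^*$ lies inside $X\setminus A$, i.e.\ when every member of $\mathcal F^*$ meets $A$; and $X\setminus A\notin\mathcal F^*$ says precisely $A\in(\mathcal F^*)^*=\mathcal F$.

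For (e), if $\mathcal F$ is a filter and $A\cup B\in\mathcal F^*$ while $A,B\notin\mathcal F^*$, then $X\setminus A$ and $X\setminus B$ both lie in $\mathcal F$, hence so does their intersection $X\setminus(A\cup B)$, contradicting $A\cup B\in\mathcal F^*$; thus $\mathcal F^*$ is a grill (it is already a stack by (a)). Running the same de Morgan identity backwards shows that if $\mathcal F^*$ is a grill then $(\mathcal F^*)^*=\mathcal F$ is closed under finite intersections, i.e.\ a filter. Given (e), part (f) is immediate: $\mathcal F=\mathcal F^*$ forces $\mathcal F$ to be a grill and hence an ultrafilter, while an ultrafilter contains no complemented pair (so $\mathcal F\subseteq\mathcal F^*$) and, being a grill with $X=A\cup(X\setminus A)\in\mathcal F$, must contain $A$ whenever $X\setminus A\notin\mathcal F$ (so $\mathcal F^*\subseteq\mathcal F$). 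For (g) I would combine (c) with the fact $p=p^*$ from (f): $\mathcal F\subseteq p$ gives $p=p^*\subseteq\mathcal F^*$, and conversely $p\subseteq\mathcal F^*$ gives $\mathcal F=(\mathcal F^*)^*\subseteq p^*=p$.

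The main work is (h); write $\mathcal G=\{B\cap C:B\in\mathcal F,\ C\in\mathcal F^*\}$. Since $X$ belongs to both $\mathcal F$ and $\mathcal F^*$, taking $C=X$ shows $\mathcal F\subseteq\mathcal G$ and taking $B=X$ shows $\mathcal F^*\subseteq\mathcal G$ (and $X=X\cap X\in\mathcal G$). If $B\cap C=\emptyset$ with $B\in\mathcal F$, $C\in\mathcal F^*$, then $B\subseteq X\setminus C$ forces $X\setminus C\in\mathcal F$, contradicting $C\in\mathcal F^*$, so $\emptyset\notin\mathcal G$. For upward closure, if $B\cap C\subseteq D\subseteq X$ then $D=(B\cup D)\cap(C\cup D)$ with $B\cup D\in\mathcal F$ and $C\cup D\in\mathcal F^*$. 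Consequently $D\in\mathcal G$ iff some $B\in\mathcal F$, $C\in\mathcal F^*$ satisfy $B\cap C\subseteq D$, and since $B\cap C\subseteq D$ is the same as $C\subseteq X\setminus(B\setminus D)$ and $\mathcal F^*$ is upward closed, this collapses to the criterion
\[
  D\in\mathcal G\qquad\Longleftrightarrow\qquad B\setminus D\notin\mathcal F\ \text{ for some }B\in\mathcal F.
\]
With this in hand the grill property falls out: if $A_1\cup A_2\in\mathcal G$, pick $B\in\mathcal F$ with $B\setminus(A_1\cup A_2)\notin\mathcal F$; if $B\setminus A_1\notin\mathcal F$ then $A_1\in\mathcal G$, and otherwise $B\setminus A_1\in\mathcal F$ while $(B\setminus A_1)\setminus A_2=B\setminus(A_1\cup A_2)\notin\mathcal F$, so $A_2\in\mathcal G$. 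The step I expect to be the real obstacle is exactly this reformulation of membership in $\mathcal G$ in terms of ``$B\setminus D\notin\mathcal F$'' (equivalently, spotting that $(X\setminus B)\cup D$ is the optimal candidate for the $\mathcal F^*$-factor); wrestling with the two nested existential quantifiers in the raw definition of $\mathcal G$ directly is where one can easily get stuck.
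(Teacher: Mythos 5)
Your proposal is correct, and parts (a)--(g) follow the paper's proof essentially verbatim. For (h) you run the same underlying argument — the decisive case split on whether $B\setminus A_1\in\mathcal F$ — but you first extract a clean membership criterion ($D\in\mathcal G$ iff $B\setminus D\notin\mathcal F$ for some $B\in\mathcal F$) and then apply it, whereas the paper manipulates $A_2\setminus A_1=(B\setminus A_1)\cap C$ and $A_1=B\cap\bigl((X\setminus B)\cup A_1\bigr)$ directly; these are the same idea in different packaging.
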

\begin{proof}
    \textbf{(a)}
    Since \( X \setminus X = \emptyset \not\in \mathcal{F} \) and \( X \setminus \emptyset = X \in \mathcal{F} \) we have \( X \in \mathcal{F}^* \) and \( \emptyset \not\in \mathcal{F}^* \).
    Now let \( A \in \mathcal{F}^*  \) and \( A \subseteq B \subseteq X \).
    Since \( X \setminus B \subseteq X \setminus A \) and \( X \setminus A \not\in \mathcal{F} \) we have \( X \setminus B \not\in \mathcal{F} \) (since \( \mathcal{F} \) is a stack).
    Hence \( B \in \mathcal{F}^* \).
    This shows \( \mathcal{F}^* \) is a stack on \( X \).
    
    \textbf{(b)}
    This follows from definition: \( A \in (\mathcal{F}^*)^* \iff X \setminus A \not\in \mathcal{F}^* \iff A \in \mathcal{F} \).
    
    \textbf{(c)}
    By statement (b), it suffices to show \( \mathcal{F}_1 \subseteq \mathcal{F}_2 \) implies \( \mathcal{F}_2^* \subseteq \mathcal{F}_1^* \).
    Suppose \( \mathcal{F}_1 \subseteq \mathcal{F}_2 \) then \( A \in \mathcal{F}_2^* \), that is, \( X \setminus A \not\in \mathcal{F}_2 \) and its follows that \( X \setminus A \not\in \mathcal{F}_1 \).
    Hence \( A \in \mathcal{F}_1^* \).
    
    \textbf{(d)}
    We have \( A \in \mathcal{F} \iff X \setminus A \not\in \mathcal{F}^* \iff \neg ( \exists B \in \mathcal{F}^* )\; B \subseteq X \setminus A \iff (\forall B \in \mathcal{F}^*) \; B \cap A \ne \emptyset \), where the middle equivalence follows from statement (a).
    
    \textbf{(e)}
    By assumption and statement (a) we automatically know that \( \mathcal{F} \) and \( \mathcal{F}^* \) are both stacks on \( X \).
    
    First, suppose that \( \mathcal{F} \) is a filter and let \( A \cup B \in \mathcal{F}^* \), that is, \( (X \setminus A) \cap (X \setminus B) = X \setminus (A \cup B) \not\in \mathcal{F} \).
    It follows, since \( \mathcal{F} \) is a filter, that either \( X \setminus A \not\in \mathcal{F} \) or \( X \setminus B \not\in \mathcal{F} \), that is,     either \( A \in \mathcal{F}^* \) or \( B \in \mathcal{F}^* \).
    Therefore \( \mathcal{F}^* \) is a grill on \( X \).
    
    Now, suppose \( \mathcal{F}^* \) is a grill and let \( A \in \mathcal{F} \) and \( B \in \mathcal{F} \), that is, \( X \setminus A \not\in \mathcal{F}^* \) and \( X \setminus B \not\in \mathcal{F}^* \).
    It follows, since \( \mathcal{F}^* \) is a grill, that \( X \setminus (A \cap B) = (X \setminus A) \cup (X\setminus B) \not\in \mathcal{F}^* \), that is, \( A \cap B \in \mathcal{F} \).
    Therefore \( \mathcal{F} \) is a filter on \( X \).
    
    \textbf{(f)}
    Observe that when \( \mathcal{F} \) is a filter we have \( A \in \mathcal{F} \implies  X \setminus A \not\in \mathcal{F} \), and so \( \mathcal{F} \subseteq \mathcal{F}^* \).
    
    If \( \mathcal{F} \) is an ultrafilter, then \( \mathcal{F} \) is a grill and by statement (e) we have \( \mathcal{F}^* \) is a filter.
    Hence by our observation we have \( \mathcal{F}^* \subseteq \mathcal{F} \).
    If \( \mathcal{F}^* = \mathcal{F} \), then, since \( \mathcal{F} \) is a filter, by statement (e) we have \( \mathcal{F}^* \) is a grill and so \( \mathcal{F} \) is an ultrafilter.
    
     (Note that we need the assumption that \( \mathcal{F} \) is a filter.
     For if \( X = \{1, 2, 3\} \) and \( \mathcal{F} = \{A \subseteq X : |A| \ge 2\} \), then \( \mathcal{F}= \mathcal{F}^* \) but \( \mathcal{F} \) is not an ultrafilter, since it is not a grill.)
     
     \textbf{(g)}
     This follows directly from statements~(c) and (f).
     
     \textbf{(h)}
     Put \( \mathcal{G} = \{ B \cap C : B \in \mathcal{F} \text{ and }  C \in \mathcal{F}^* \} \) and observe that \(\mathcal{F} \subseteq \mathcal{G} \) implies \( \emptyset \ne \mathcal{G} \) and statement (d) implies \( \emptyset \not\in \mathcal{G} \).
     If \( A \in \mathcal{G} \), so \( A = B \cap C \) for some \( B \in \mathcal{F} \) and \( C \in \mathcal{F}^* \), and \(A \subseteq D \subseteq X \), then \( B \cup D \in \mathcal{F} \) (since \( \mathcal{F} \) is a stack) and \( C \cup D \in \mathcal{F}^* \) (since, by statement (a), \( \mathcal{F}^* \) is a stack) implies \( D = A \cup D = (B \cap C) \cup D = (B \cup D) \cap (C \cup D) \in \mathcal{G} \).
     This shows that \( \mathcal{G} \) is a stack.
     
     To see \( \mathcal{G} \) is a grill let \( A_1 \cup A_2 \in \mathcal{G} \) and pick \( B \in \mathcal{F} \) and \( C \in \mathcal{F}^* \) with \( A_1 \cup A_2 = B \cap C \).
     Then \( A_2 \setminus A_1 = (X \setminus A_1) \cap (A_1 \cup A_2) = (B \setminus A_1) \cap C \).
     If \( B \setminus A_1 \in \mathcal{F} \), then \( A_2 \setminus A_1 \in \mathcal{G} \) and so \( A_2 \in \mathcal{G} \).
     If \( B \setminus A_1 \not\in \mathcal{F} \), then \( (X \setminus B) \cup A_1 \in \mathcal{F}^*\) and so \( B \cap  A_1 = B \cap \bigl((X \setminus B) \cup A_1\bigr) \in \mathcal{G} \) and hence \( A_1 \in \mathcal{G} \).
\end{proof}

For us \cref{prop:mesh-operator}(f)~and~(g) are of fundamental importance to the ultrafilter description of the Stone--\v{C}ech compactification and its connections to Ramsey Theory. 
For instance see \cite[Theorems~3.11 and 5.7]{Hindman:2012tq}.
The earliest reference we've found for \cref{prop:mesh-operator}(h) is \cite[Proposition~2.1(e)]{Akin1997} (in Akin's formulation it asserts that the set \( \{\, B \cap C : B \in \mathcal{F} \mbox{ and } C \in \mathcal{F}^* \,\}^* \) is a filter).
Our proof follows a blogpost of Moreira \cite[Proposition~3]{Moreira:2016uz}.

One application of \cref{prop:mesh-operator}(h) is that it implies \( \mathsf{PS} \) is a grill (this implication is also noted by Moreira \cite[Corollary~4]{Moreira:2016uz}):
\begin{corollary}
  \label{corollary:brown-lemma}
  Let \( (S, \cdot) \) be a semigroup.
  Then the collection of all piecewise syndetic sets \( \mathsf{PS} \) is a grill on \( S \).
\end{corollary}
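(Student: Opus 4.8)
The plan is to recognize $\mathsf{PS}$ as a collection of the shape appearing in \cref{prop:mesh-operator}(h). Recall the duality $\mathsf{Syn} = \mathsf{Thick}^*$ observed just before \cref{prop:mesh-operator}; combined with \cref{prop:mesh-operator}(b) this also gives $\mathsf{Syn}^* = (\mathsf{Thick}^*)^* = \mathsf{Thick}$. Since $\mathsf{Syn}$ is a stack on $S$, \cref{prop:mesh-operator}(h) applied with $\mathcal{F} = \mathsf{Syn}$ says at once that the collection
\[
  \mathcal{G} \coloneqq \{\, B \cap C : B \in \mathsf{Syn} \text{ and } C \in \mathsf{Thick} \,\}
\]
is a grill on $S$. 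Thus the corollary follows as soon as we establish the identity $\mathsf{PS} = \mathcal{G}$; this is the result of Bergelson, Hindman, and McCutcheon recorded as \cite[Theorem~2.4(d)]{Bergelson1998}, which I would simply cite, though its two inclusions are short enough to include directly.

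For $\mathcal{G} \subseteq \mathsf{PS}$: if $B$ is syndetic, fix $H \in \mathcal{P}_f(S)$ with $\bigcup_{h \in H} h^{-1}B = S$; given any thick $C$ and any $K \in \mathcal{P}_f(S)$, thickness of $C$ produces $z$ with $HKz \subseteq C$, and then syndeticity of $B$ applied to each element $kz$ (for $k \in K$) yields some $h \in H$ with $hkz \in B \cap C$, so $\bigcup_{h \in H} h^{-1}(B \cap C)$ is thick and $B \cap C \in \mathsf{PS}$. For $\mathsf{PS} \subseteq \mathcal{G}$: if $A$ is piecewise syndetic with witness $H \in \mathcal{P}_f(S)$, put $T = \bigcup_{h \in H} h^{-1}A$ (thick), $C = A \cup T$, and $B = A \cup (S \setminus T)$; then $C$ is thick, $B \cap C = A$ by distributivity (since $(S\setminus T) \cap T = \emptyset$), and $B$ is syndetic with witness $L = H \cup \{\, h_1 h_2 : h_1, h_2 \in H \,\}$, the key point being that $Ls \subseteq T \setminus A = S \setminus B$ is impossible: picking any $h_2 \in H$, $h_2 s \in T$ would force $h_1 h_2 s \in A$ for some $h_1 \in H$, contradicting $Ls \cap A = \emptyset$. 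This last witness-construction is the only genuinely clever step in the whole argument, and is where I would expect the main (modest) obstacle to lie.

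Assembling the pieces, $\mathsf{PS} = \mathcal{G}$ and $\mathcal{G}$ is a grill by \cref{prop:mesh-operator}(h), so $\mathsf{PS}$ is a grill on $S$. Beyond the step just flagged, the only thing to keep straight is the left/right orientation of the translate notation $h^{-1}(\cdot)$ of \cref{definition:syndetic-and-thick}, which must be used consistently across the two inclusions (and matched against \cite{Bergelson1998} if one quotes that theorem instead of reproving it).
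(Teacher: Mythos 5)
Your argument is correct and coincides with the paper's own: both reduce the claim to the Bergelson--Hindman--McCutcheon identity $\mathsf{PS} = \{\,B \cap C : B \in \mathsf{Syn},\ C \in \mathsf{Thick}\,\}$ and then apply \cref{prop:mesh-operator}(h) with the duality $\mathsf{Syn} = \mathsf{Thick}^*$. The only difference is that the paper cites the identity while you also reprove both inclusions directly; those direct arguments, including the witness $L = H \cup \{\,h_1 h_2 : h_1, h_2 \in H\,\}$ for $\mathsf{PS} \subseteq \mathcal{G}$, are correct and consistent with the left-action convention $x^{-1}A = \{y : xy \in A\}$ used throughout.
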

\begin{proof}

	By definition \( \mathsf{PS} = \{ B \cap C : B \in \mathsf{Syn} \mbox{ and } C \in \mathsf{Thick} \} \), and so by \cref{prop:mesh-operator}(h) and the duality \( \mathsf{Syn} = \mathsf{Thick}^* \) we have \( \mathsf{PS} \) is a grill.
\end{proof}

\subsection*{Brief review of algebraic structure of the Stone--\v{C}ech compactification}
\label{section:algebraic-review}
We'll end this section by fixing some additional notation
and giving a brief review of the algebraic structure of the Stone--\v{C}ech compactification of a discrete semigroup.

A standing convention throughout this paper is \( S \) will denote an infinite discrete semigroup with \( \cdot \) as its binary operation.
We take \( \beta S \) to be the collection of all ultrafilters on \( S \).
We identify points of \( S \) with the principal ultrafilters in \( \beta S \), pretending that \( S \subseteq \beta S \).
Given \( A \subseteq S \) we put \( \overline{A} = \{ p \in \beta S : A \in p \} \).
Then the collection \( \{\overline{A} : A \subseteq S \} \) is a basis for a compact Hausdorff topology on \( \beta S \) and \( c\ell_{\beta S}(A) = \overline{A} \).
This topology is the Stone--\v{C}ech compactification of \( S \).

If \( \mathcal{F} \) is a filter on \( S \), we put \( \overline{\mathcal{F}} = \{p \in \beta S : \mathcal{F} \subseteq p \} \).
Then \( \overline{\mathcal{F}} \) is a nonempty closed subset of \( \beta S \);
conversely,
any nonempty closed subset of \( \beta S \) is uniquely generated by some filter.
The empty subset of \( \beta S \) is generated by the so-called ``improper filter'' \( \mathcal{P}(S) \).
(More precisely,
if \( C \subseteq \beta S \) is a closed subset,
then \( \bigcap \{ p \in \beta S : p \in C \} \) is the filter that generates it.)

The proofs of these assertions can be found in \cite[Sections~3.2 and 3.3]{Hindman:2012tq}.

The semigroup operation on \( S \) can be extended to a semigroup operation on \( \beta S \) \cite[Theorem~4.1]{Hindman:2012tq} such that for every \( p, q \in \beta S \) we have \( A \in p \cdot q \) if and only if \( \{x \in S : x^{-1}A \in q \} \in p \) \cite[Theorem~4.12]{Hindman:2012tq}.
This extension makes \( (\beta S, \cdot) \) a compact right topological semigroup. 
Right topological means that for every \( q \in \beta S \) the map \( p \mapsto p \cdot q \) for all \( p \in \beta S \) is continuous. 

\textbf{Important Note:}
Several of our references, \cite{Berglund1984, davenport:1987, Davenport:1990wq}, take \( \beta S \) to be left topological.
We take our algebraic structure on \( \beta S \) to be right topological.
In all the cases we cite, the appropriate left-right switches of their proofs and statements give the corresponding right topological version.

Compact Hausdorff right topological semigroups contain significant algebraic structure.
For instance, \( (\beta S, \cdot) \) has idempotent elements (usually many of them) \cite[Theorem~2.5]{Hindman:2012tq} and a smallest ideal \( K(\beta S) \) which is the union of all minimal left ideals of \( \beta S \) and also the union of all minimal right ideals of \( \beta S \) \cite[Theorem~2.8]{Hindman:2012tq}.

The characterization \( A \in p \cdot q \iff \{\, x \in S : x^{-1}A \in q \,\} \in p \) can be taken as the definition for \( \mathcal{F} \cdot \mathcal{G} \) when both \( \mathcal{F} \) and \( \mathcal{G} \) are stacks.
That is, we can define \( \mathcal{F} \cdot \mathcal{G} = \bigl\{\, A \subseteq S : \{\, x \in S : x^{-1}A \in \mathcal{G} \,\} \in \mathcal{F} \,\bigr\} \).
Berglund and Hindman proved \cite[Lemma~5.15]{Berglund1984} that when both \( \mathcal{F} \) and \( \mathcal{G} \) are filters, this product is also a filter and associative.
We simply note that their proof (with appropriate left-right switches) also shows, if both \( \mathcal{F} \) and \( \mathcal{G} \) are stacks, then this product is also a stack and is associative. 

\section{Relative notions of syndetic and thick sets}
\label{section:relative-syndetic-and-thick}
The goal of this section is to demonstrate how the algebraic structure of \( \beta S \) can be used to concisely characterize many instances of relative notions of syndetic and thick sets.

\begin{definition}
	\label{definition:relative-syndetic-and-thick}
	Let \( A \subseteq S \) and let \( \mathcal{F} \) and \( \mathcal{G} \) both be stacks on \( S \).
  
	\begin{itemize}   
    \item[(a)]
      \( A \) is \define{\( (\mathcal{F}, \mathcal{G}) \)-syndetic}
      if and only if
      for every \( B \in \mathcal{F} \) there exists \( H \in \mathcal{P}_f(B) \) such that \( \bigcup_{h \in H} h^{-1}A \in \mathcal{G} \).
      
	 \item[(b)] 
      \( A \) is \define{\( (\mathcal{F}, \mathcal{G}) \)-thick}
      if and only if
      there exists \( B \in \mathcal{F} \) such that for every \( H \in \mathcal{P}_f(B) \) we have \( \bigcap_{h \in H} h^{-1}A \in \mathcal{G}^* \).
      
      \item[(c)] We also define the following two collections:
	      \begin{itemize}
    	  	\item[(i)]  \( \mathsf{Syn}(\mathcal{F}, \mathcal{G}) = \{A \subseteq S : A \mbox{ is \( (\mathcal{F}, \mathcal{G}) \)-syndetic} \} \).

    	  	\item[(ii)]  \( \mathsf{Thick}(\mathcal{F}, \mathcal{G}) = \{A \subseteq S : A \mbox{ is \( (\mathcal{F}, \mathcal{G}) \)-thick} \} \).
	      \end{itemize}
 	 \end{itemize}
\end{definition}

The notation and name of ``\( (\mathcal{F}, \mathcal{G}) \)-syndetic'' is due to Shuungula, Zelenyuk, and Zelenyuk \cite[third paragraph of Section~2]{Shuungula:2009ty}, and our approach to these notions are influenced by the results and methods in their paper (for instance, see \cref{remark:choice-of-notation}).
They used this notion, when \( \mathcal{F} \) and \( \mathcal{G} \) are both filters, to prove a simple characterization of the smallest ideal in a closed subsemigroup of \( \beta S \) \cite[Theorem~2.2]{Shuungula:2009ty} analogous to a characterization of the smallest ideal of \( \beta S \) \cite[Theorem~4.39]{Hindman:2012tq}.
Davenport obtained a similar characterization (after a bit of rewriting) earlier \cite[Theorem~3.4]{Davenport:1990wq}.

Observe \( \mathsf{Syn} = \mathsf{Syn}(\{S\}, \{S\}) \) and \( \mathsf{Thick} = \mathsf{Thick}(\{S\}, \{S\}) \).

The notions of \( (\mathcal{F}, \mathcal{F}) \)-syndetic and \( (\mathcal{F}, \mathcal{F}) \)-thick, again when \( \mathcal{F} \) is a filter, also appears in a note of Protasov and Sloboadianiuk \cite{Protasov:2015uz} as ``\( \tau \)-large'' and ``\( \tau \)-thick'', respectively. 
Among other things, they also characterize the smallest ideal of a closed subsemigroup of \( \beta S \) \cite[Theorem~3.1]{Protasov:2015uz}.

Additionally, Blass and Di Nasso \cite{Blass2016} also studied the notion of ``finite embeddability'' and its connection to the algebraic structure of the Stone--\v{C}ech compactification.
In our terminology and notation, given two subsets \( A \) and \( B \) of the nonnegative integers, \( \mathbb{Z}_{\ge 0} \), they defined \( A \) is \define{finitely embeddable} in \( B \) if and only if \( B \) is \( (\mathcal{F}, \{\mathbb{Z}_{\geq 0}\}) \)-thick where the first component is the principal filter \( \mathcal{F} = \{ C \subseteq \mathbb{Z}_{\ge 0} : A \subseteq C \} \) generated by \( A \).
Baglini further studied \cite{LuperiBaglini2014} and extended \cite{LuperiBaglini2016} the concept of finite embeddability.

Similar to syndetic and thick, it's easy to observe these relative notions share a duality:
\begin{proposition}
\label{proposition:duality-principle}
  Let \( \mathcal{F} \) and \( \mathcal{G} \) both be stacks on a semigroup \( S \).
  
  \begin{itemize}
	\item[(a)] \( \mathsf{Syn}(\mathcal{F}, \mathcal{G}) = \mathsf{Thick}(\mathcal{F}, \mathcal{G})^* \).
	\item[(b)] \( \mathsf{Syn}(\mathcal{F}, \mathcal{G}) = \{ A \subseteq S : (\forall B \in \mathsf{Thick}(\mathcal{F}, \mathcal{G})) \; A \cap B \ne \emptyset \} \).
  \end{itemize}
\end{proposition}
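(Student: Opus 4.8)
The plan is to prove (a) by directly unwinding the two definitions together with one De~Morgan computation, and then to deduce (b) formally from (a) and \cref{prop:mesh-operator}.

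For (a), note first that by the definition of the mesh operator $\mathsf{Thick}(\mathcal{F}, \mathcal{G})^* = \{ A \subseteq S : S \setminus A \notin \mathsf{Thick}(\mathcal{F}, \mathcal{G}) \}$, so it suffices to show that $A$ is $(\mathcal{F}, \mathcal{G})$-syndetic if and only if $S \setminus A$ is not $(\mathcal{F}, \mathcal{G})$-thick. The two elementary facts that drive this are: $h^{-1}(S \setminus A) = S \setminus h^{-1}A$ for each $h \in S$, so that $\bigcap_{h \in H} h^{-1}(S \setminus A) = S \setminus \bigcup_{h \in H} h^{-1}A$ by De~Morgan; and $S \setminus C \in \mathcal{G}^*$ if and only if $C \notin \mathcal{G}$, straight from the definition of $\mathcal{G}^*$. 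Substituting these into \cref{definition:relative-syndetic-and-thick}(b) shows that ``$S \setminus A$ is $(\mathcal{F}, \mathcal{G})$-thick'' is equivalent to ``there exists $B \in \mathcal{F}$ such that for all $H \in \mathcal{P}_f(B)$ we have $\bigcup_{h \in H} h^{-1}A \notin \mathcal{G}$''; negating this $\exists B\,\forall H$ statement gives precisely ``for all $B \in \mathcal{F}$ there exists $H \in \mathcal{P}_f(B)$ with $\bigcup_{h \in H} h^{-1}A \in \mathcal{G}$'', which is \cref{definition:relative-syndetic-and-thick}(a). Since $\emptyset \notin \mathcal{F}$, each such $B$ is nonempty and $\mathcal{P}_f(B) \neq \emptyset$, so no quantifier ranges over the empty set.

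Next I would record the routine observation that $\mathsf{Syn}(\mathcal{F}, \mathcal{G})$ and $\mathsf{Thick}(\mathcal{F}, \mathcal{G})$ are stacks on $S$: that $S$ lies in each and $\emptyset$ in neither follows from $h^{-1}S = S$, $h^{-1}\emptyset = \emptyset$, and the fact that $\mathcal{G}$ and (by \cref{prop:mesh-operator}(a)) $\mathcal{G}^*$ are stacks; upward closure follows from the monotonicity $A \subseteq A' \implies h^{-1}A \subseteq h^{-1}A'$ together with upward closure of $\mathcal{G}$, respectively $\mathcal{G}^*$.

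Finally, (b) is formal. Because $\mathsf{Syn}(\mathcal{F}, \mathcal{G})$ is a stack, \cref{prop:mesh-operator}(d) gives $\mathsf{Syn}(\mathcal{F}, \mathcal{G}) = \{ A \subseteq S : (\forall B \in \mathsf{Syn}(\mathcal{F}, \mathcal{G})^*)\; A \cap B \neq \emptyset \}$; and by (a) together with \cref{prop:mesh-operator}(b) applied to the stack $\mathsf{Thick}(\mathcal{F}, \mathcal{G})$ we get $\mathsf{Syn}(\mathcal{F}, \mathcal{G})^* = (\mathsf{Thick}(\mathcal{F}, \mathcal{G})^*)^* = \mathsf{Thick}(\mathcal{F}, \mathcal{G})$, so substituting yields the claim. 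There is no serious obstacle here; the only point needing care is that the quantifier alternation $\exists B\,\forall H$ in the definition of $(\mathcal{F},\mathcal{G})$-thick must dualize exactly to the $\forall B\,\exists H$ in the definition of $(\mathcal{F},\mathcal{G})$-syndetic, and that it is the $\mathcal{G}^*$ (not $\mathcal{G}$) appearing in the thick definition which makes the complemented intersection $\bigcap_{h\in H} h^{-1}(S\setminus A)$ collapse back to the condition that $\bigcup_{h\in H} h^{-1}A$ fails to lie in $\mathcal{G}$.
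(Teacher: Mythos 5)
Your proof is correct and takes essentially the same approach as the paper: a direct De~Morgan computation to establish (a), followed by an appeal to \cref{prop:mesh-operator}(d) (and (b)) to deduce (b). The only difference is that you explicitly verify that \( \mathsf{Syn}(\mathcal{F},\mathcal{G}) \) and \( \mathsf{Thick}(\mathcal{F},\mathcal{G}) \) are stacks before invoking \cref{prop:mesh-operator}, a routine hypothesis check the paper leaves implicit.
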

\begin{proof}
  \textbf{(a)} 
  The justification for statement (a) is ultimately an easy application of De Morgan's laws:
   \begin{align*}
	A \in \mathsf{Syn}(\mathcal{F}, \mathcal{G}) &\iff (\forall B \in \mathcal{F})\bigl(\exists H \in \mathcal{P}_f(B)\bigr)\,\; \bigcup_{h \in H} h^{-1}A \in \mathcal{G} \\
	&\iff \neg (\exists B \in \mathcal{F})\bigl(\forall H \in \mathcal{P}_f(B)\bigr)\; \bigcap_{h \in H} h^{-1}(S \setminus A) \in \mathcal{G}^* \\
	&\iff S \setminus A \not\in \mathsf{Thick}(\mathcal{F}, \mathcal{G}) \iff A \in \mathsf{Thick}(\mathcal{F}, \mathcal{G})^*.
 \end{align*}

  \textbf{(b)} 
  Statement (b) then follows immediately from \cref{prop:mesh-operator}(d).
\end{proof}

\begin{remark}
	\label{remark:choice-of-notation}
	The reader may wonder why we refer to the mesh of a collection \( \mathcal{G}^* \) in the definition of \( (\mathcal{F}, \mathcal{G}) \)-thick and \textit{not indicate this} directly in the notation, perhaps by calling such sets ``\( (\mathcal{F}, \mathcal{G}^*) \)-thick'' instead.
	Our choice was made so we can follow the notation as used by Shuungula, Zelenyuk, and Zelenyuk \cite{Shuungula:2009ty} and maintain the duality principle between the notions of \( (\mathcal{F}, \mathcal{G}) \)-thick and \( (\mathcal{F}, \mathcal{G}) \)-syndetic (see \cref{proposition:duality-principle} above).
\end{remark}

\begin{example}
  \label{example:relative-thick-does-not-imply-thick}
  As the reader probably suspects, our relative notion of thickness does not imply the usual notion of thickness even in \( (\mathbb{N}, +) \).
  Consider the principal filter generated by the even positive integers \( \mathcal{F} = \{A \subseteq \mathbb{N} : 2\mathbb{N} \subseteq A \} \).
  Observe that \( A \in \mathsf{Thick}(\mathcal{F}, \{\mathbb{N}\}) \) if and only if for all \( F \in \mathcal{P}_f(\mathbb{N}) \) there exists \( x \in \mathbb{N} \) such that \( 2 F + x \subseteq A \).
  In particular, the set of even positive integers is \( (\mathcal{F}, \{\mathbb{N}\}) \)-thick but is not thick.
  
  Hence \( (\mathcal{F}, \{\mathbb{N}\}) \)-thick is a weaker notion than thick (since it allows more sets to be ``thick''), that is, \( \mathsf{Thick} \subsetneq \mathsf{Thick}(\mathcal{F}, \{\mathbb{N}\}) \).
  Also, by Proposition~\ref{proposition:duality-principle}, this is equivalent to writing \( \mathsf{Syn}(\mathcal{F}, \{\mathbb{N}\}) \subsetneq \mathsf{Syn} \), that is, \( (\mathcal{F}, \{\mathbb{N}\}) \)-syndetic is a stronger notion than syndetic (since it allows fewer sets to be ``syndetic''). 
\end{example}

As \cref{example:relative-thick-does-not-imply-thick} indicates, we also have the following order-reversing [order-preserving] implications for relative syndetic and thick sets:
\begin{proposition}
    \label{proposition:ordering}
    Let \( S \) be a semigroup and let \( \mathcal{F} \), \( \mathcal{F}_1 \),  \( \mathcal{F}_2 \), \( \mathcal{G} \), \( \mathcal{G}_1 \), and \( \mathcal{G}_2 \) all be stacks on \( S \).
    \begin{itemize}
        \item[(a)] If \( \mathcal{F}_1 \subseteq \mathcal{F}_2 \), then \(\mathsf{Syn}(\mathcal{F}_2, \mathcal{G}) \subseteq \mathsf{Syn}(\mathcal{F}_1, \mathcal{G}) \). 
        
        \item[(b)] If \( \mathcal{G}_1 \subseteq \mathcal{G}_2 \), then \( \mathsf{Syn}(\mathcal{F}, \mathcal{G}_1) \subseteq \mathsf{Syn}(\mathcal{F}, \mathcal{G}_2) \).
        
        \item[(a\(^ \prime \))] If \( \mathcal{F}_1 \subseteq \mathcal{F}_2 \), then \(\mathsf{Thick}(\mathcal{F}_1, \mathcal{G}) \subseteq \mathsf{Thick}(\mathcal{F}_2, \mathcal{G}) \). 
        
        \item[(b\(^ \prime \))]  If \( \mathcal{G}_1 \subseteq \mathcal{G}_2 \), then \( \mathsf{Thick}(\mathcal{F}, \mathcal{G}_2) \subseteq \mathsf{Thick}(\mathcal{F}, \mathcal{G}_1)  \).      
    \end{itemize}
\end{proposition}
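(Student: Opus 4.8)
The plan is to prove all four inclusions by directly unwinding \cref{definition:relative-syndetic-and-thick}; each reduces to a one-line observation about quantifier monotonicity. For part~(a), I would take $A \in \mathsf{Syn}(\mathcal{F}_2, \mathcal{G})$ and an arbitrary $B \in \mathcal{F}_1$; since $\mathcal{F}_1 \subseteq \mathcal{F}_2$ we have $B \in \mathcal{F}_2$, so $(\mathcal{F}_2,\mathcal{G})$-syndeticity of $A$ yields $H \in \mathcal{P}_f(B)$ with $\bigcup_{h \in H} h^{-1}A \in \mathcal{G}$, which is exactly the condition needed for $A \in \mathsf{Syn}(\mathcal{F}_1, \mathcal{G})$. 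The structural point is that the $\mathcal{F}$-parameter appears only as a universal quantifier ``for every $B \in \mathcal{F}$'', so shrinking $\mathcal{F}$ weakens the requirement and enlarges the collection of syndetic sets — hence the order reversal.

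For part~(b), the $\mathcal{G}$-parameter occurs positively, inside the conclusion $\bigcup_{h\in H} h^{-1}A \in \mathcal{G}$, so enlarging $\mathcal{G}$ preserves syndeticity: given $A \in \mathsf{Syn}(\mathcal{F},\mathcal{G}_1)$ and $B \in \mathcal{F}$, the witness $H$ for $\mathcal{G}_1$ also works for $\mathcal{G}_2$ because $\bigcup_{h \in H} h^{-1}A \in \mathcal{G}_1 \subseteq \mathcal{G}_2$. Parts~(a$'$) and~(b$'$) can then be handled either directly or by duality. Directly: for~(a$'$), a witness $B \in \mathcal{F}_1$ for $(\mathcal{F}_1,\mathcal{G})$-thickness already lies in $\mathcal{F}_2$ and continues to witness $(\mathcal{F}_2,\mathcal{G})$-thickness; for~(b$'$), one uses that $\mathcal{G}_1 \subseteq \mathcal{G}_2$ implies $\mathcal{G}_2^* \subseteq \mathcal{G}_1^*$ by \cref{prop:mesh-operator}(c), so the condition $\bigcap_{h\in H} h^{-1}A \in \mathcal{G}_2^*$ upgrades to $\bigcap_{h\in H} h^{-1}A \in \mathcal{G}_1^*$. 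Alternatively, one can derive (a$'$) and (b$'$) formally from (a) and (b): by \cref{proposition:duality-principle}(a) and \cref{prop:mesh-operator}(b) we have $\mathsf{Thick}(\mathcal{F},\mathcal{G}) = \mathsf{Syn}(\mathcal{F},\mathcal{G})^*$, and the order-reversal of the mesh operator, \cref{prop:mesh-operator}(c), converts the already-proven syndetic inclusions into the thick ones.

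I do not expect a genuine obstacle here: the statement is essentially bookkeeping about monotonicity of nested quantifiers. The only step that warrants a moment's attention is~(b$'$), where the relevant monotonicity is in $\mathcal{G}^*$ rather than in $\mathcal{G}$, so one must remember to invoke the contravariance of the mesh operator (\cref{prop:mesh-operator}(c)) instead of applying $\mathcal{G}_1 \subseteq \mathcal{G}_2$ naively. In the write-up I would give the direct proofs of (a) and (b) in full and then either record the two-line direct arguments for (a$'$), (b$'$) or simply note that they follow by duality, whichever sits more naturally next to \cref{proposition:duality-principle}.
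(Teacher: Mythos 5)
Your proposal is correct and matches the paper's proof: the paper also gives the one-line direct arguments for (a) and (b) (literally the same two observations you describe) and obtains (a\(^\prime\)) and (b\(^\prime\)) by appealing to the duality \( \mathsf{Syn}(\mathcal{F},\mathcal{G}) = \mathsf{Thick}(\mathcal{F},\mathcal{G})^* \) together with the order-reversal of the mesh operator. Your extra remarks — the direct alternative arguments for (a\(^\prime\)) and (b\(^\prime\)) and, in particular, the warning that (b\(^\prime\)) silently routes through the contravariance of the mesh operator applied to \( \mathcal{G} \) — are accurate and, if anything, a bit more explicit than the paper, which simply records the equivalence by duality and gives no direct argument for the thick statements.
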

\begin{proof}
    By \cref{proposition:duality-principle}(c), statement (a) is equivalent to statement (a\(^ \prime \)) and statement (b) is equivalent to statement (b\(^ \prime \)).
    The proofs of statements (a) and (b) are each straightforward one-line verifications.

    \textbf{(a)} Let \( A \in \mathsf{Syn}(\mathcal{F}_2, \mathcal{G}) \) and let \( B \in \mathcal{F}_1 \subseteq \mathcal{F}_2\). 
    Pick \( H \in \mathcal{P}_f(B) \) such that \( \bigcup_{h \in H} h^{-1}A \in \mathcal{G} \) as guaranteed by \( A \). 
    
    \textbf{(b)} Let \( A \in \mathsf{Syn}(\mathcal{F}, \mathcal{G}_1) \) and let \( B \in \mathcal{F} \). 
    Pick \( H \in \mathcal{P}_f(B) \) such that \( \bigcup_{h \in H} h^{-1}A \in \mathcal{G}_1 \subseteq \mathcal{G}_2 \).
\end{proof}

We also note that that relative syndetic and thick collections \emph{do not necessarily preserve strict inclusion} among either the left or right components:

\begin{example}
    \label{example:strict-inclusion-not-preserve}
    In \( (\mathbb{N}, +) \) we have \( \mathsf{Thick}(\{ \mathbb{N} \}, \{ \mathbb{N} \}) = \mathsf{Thick}(\mathcal{C}, \{ \mathbb{N} \}) = \mathsf{Thick}(\{ \mathbb{N} \}, \mathcal{C}) = \mathsf{Thick}(\mathcal{C}, \mathcal{C}) \), where \( \mathcal{C} = \{A \subseteq \mathbb{N} : \mathbb{N} \setminus A \text{ is finite} \} \) is the cofinite filter on \( \mathbb{N} \).
    (This can either be checked directly from the definitions or noting that \( \overline{\mathcal{C}} \) (that is, the collection of all non-principal ultrafilters on \( \mathbb{N} \)) is a closed ideal of \( \beta \mathbb{N} \) and applying \cref{lemma:algebraic-characterizations}(c) and \cref{theorem:algebraic-characterizations}.)
\end{example}

One advantage in following Shuungula, Zelenyuk, and Zelenyuk's notation is that it allows us to ``compose'' these notions to produce (possibly new) relative notions of size.
To get a sense of what we mean by composing these notions consider \( \mathsf{Thick}(\mathcal{F}, \mathcal{G}) \) where \( \mathcal{F} \) and \( \mathcal{G} \) are in \( \{\, \{S\}, \mathsf{Syn}, \mathsf{Thick} \,\} \).
In this case we have at most \textit{nine different notions of being (relatively) thick}.
Some of these notions of relative thick are known.
For instance, we have the following characterization for the collection of all piecewise syndetic sets \( \mathsf{PS} \) as a composition (starting with a well known characterization \( \mathsf{PS} \)):
\begin{proposition}
  \label{proposition:characterization-of-PS-as-broken-syndetic}
  Let \( S \) be a semigroup.
  \begin{itemize}
    \item[(a)] \( \mathsf{PS} = \mathsf{Syn}(\{S\}, \mathsf{Thick}) \).
  	\item[(b)] \( \mathsf{PS} = \mathsf{Thick}(\mathsf{Syn}, \mathsf{PS}^*) \).
  	\item[(c)] \( \mathsf{PS}^* = \mathsf{Thick}(\{S\}, \mathsf{Thick}) \).
  	\item[(d)] \( \mathsf{PS} = \mathsf{Thick}(\mathsf{Syn}, \mathsf{Thick}(\{S\}, \mathsf{Thick})) \).
  \end{itemize}
\end{proposition}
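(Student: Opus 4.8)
The plan is to treat the four parts in the order (a), (c), (b), (d): part (a) is immediate from the definitions, part (c) follows formally from (a) and the duality principle, part (b) is where all the work lies, and part (d) is then a one-line substitution.

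For (a) I would just unwind the definition of $(\{S\},\mathsf{Thick})$-syndetic: since the only member of the stack $\{S\}$ is $S$ itself, ``$A$ is $(\{S\},\mathsf{Thick})$-syndetic'' says precisely that there is some $H\in\mathcal P_f(S)$ with $\bigcup_{h\in H}h^{-1}A\in\mathsf{Thick}$, which is verbatim \cref{definition:syndetic-and-thick}(c). For (c), I would apply the mesh operator to \cref{proposition:duality-principle}(a): since $(\mathcal Y^*)^*=\mathcal Y$ for every $\mathcal Y\subseteq\mathcal P(S)$ (this is exactly the computation in the proof of \cref{prop:mesh-operator}(b), which never uses the stack hypothesis), that proposition gives $\mathsf{Thick}(\{S\},\mathsf{Thick})=\mathsf{Syn}(\{S\},\mathsf{Thick})^*$, and by (a) the right-hand side is $\mathsf{PS}^*$. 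Part (d) is then obtained by substituting $\mathsf{Thick}(\{S\},\mathsf{Thick})=\mathsf{PS}^*$ (part (c)) into $\mathsf{PS}=\mathsf{Thick}(\mathsf{Syn},\mathsf{PS}^*)$ (part (b)).

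So the heart of the matter is (b). Since $\mathsf{PS}$ is a stack, $(\mathsf{PS}^*)^*=\mathsf{PS}$, and unwinding \cref{definition:relative-syndetic-and-thick}(b) shows $A\in\mathsf{Thick}(\mathsf{Syn},\mathsf{PS}^*)$ iff there is a syndetic set $B$ with $\bigcap_{h\in H}h^{-1}A\in\mathsf{PS}$ for every $H\in\mathcal P_f(B)$. The inclusion $\mathsf{Thick}(\mathsf{Syn},\mathsf{PS}^*)\subseteq\mathsf{PS}$ I expect to be purely combinatorial: picking any $h$ in the witnessing syndetic set and taking $H=\{h\}$ gives $h^{-1}A\in\mathsf{PS}$, and then the identity $g^{-1}(h^{-1}A)=(hg)^{-1}A$ transports a finite set $H'$ witnessing piecewise syndeticity of $h^{-1}A$ to the finite set $\{hg:g\in H'\}$, for which $\bigcup_{k\in\{hg:g\in H'\}}k^{-1}A$ is the same set as $\bigcup_{g\in H'}g^{-1}(h^{-1}A)$, hence thick; so $A\in\mathsf{PS}$. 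For the reverse inclusion I would pass through $\beta S$: given $A\in\mathsf{PS}$, use \cref{theorem:algebraic-characterizations}(c) to choose $q\in\overline A\cap K(\beta S)$ and set $B=\{x\in S:x^{-1}A\in q\}$. Because $q$ is a filter, every finite intersection $\bigcap_{h\in H}h^{-1}A$ with $H\in\mathcal P_f(B)$ again lies in $q$, and since $q$ sits in some minimal left ideal, \cref{theorem:algebraic-characterizations}(c) gives $\bigcap_{h\in H}h^{-1}A\in\mathsf{PS}$; the only remaining point is that $B$ itself is syndetic.

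Establishing that $B$ is syndetic is the step I expect to be the main obstacle, since it is the one place where the structure theory of minimal left ideals — not merely the mesh calculus and the definitions — is genuinely used. I would verify the criterion in \cref{theorem:algebraic-characterizations}(a): using the product formula $A\in p\cdot q\iff\{x:x^{-1}A\in q\}\in p$, one identifies $\overline B=\{p\in\beta S:p\cdot q\in\overline A\}$, and the task is to place a point of each minimal left ideal $L'$ into this set. Let $L$ be the minimal left ideal containing $q$; then $\beta S\cdot q\subseteq L$, and for any minimal left ideal $L'$ the set $L'\cdot q$ is a nonempty left ideal contained in $L$, hence equals $L$ by minimality. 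Since $q\in L=L'\cdot q$, there is $p\in L'$ with $p\cdot q=q\in\overline A$, so $p\in L'\cap\overline B$; as $L'$ was arbitrary, $B$ is syndetic. Once this is in place, parts (b) and then (d) follow as described, and nothing beyond routine bookkeeping remains.
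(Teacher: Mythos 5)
Your proof is correct and follows the paper's overall decomposition exactly: (a) is a direct unwinding of \cref{definition:syndetic-and-thick}(c), (c) is (a) plus \cref{proposition:duality-principle}, (b) carries the real content, and (d) substitutes (c) into (b). The two internal steps of (b) are where you diverge from the paper, both for the better or at least not for the worse. For \( \mathsf{Thick}(\mathsf{Syn},\mathsf{PS}^*)\subseteq\mathsf{PS} \), the paper uses the full syndeticity of the witness \( B \): it picks \( H\in\mathcal{P}_f(S) \) with \( \bigcup_{h\in H}h^{-1}B = S \), and for each finite \( F\subseteq S \) builds a finite subset \( \{h_f f : f\in F\} \) of \( B \) to show \( \bigcup_{h\in H}h^{-1}A \) is thick. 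Your argument uses only a single \( h\in B \): from \( h^{-1}A\in\mathsf{PS} \) and the identity \( g^{-1}(h^{-1}A)=(hg)^{-1}A \) you transport the witness for \( h^{-1}A \) to one for \( A \). This is cleaner and in fact proves the strictly stronger inclusion \( \mathsf{Thick}(\mathcal{F},\mathsf{PS}^*)\subseteq\mathsf{PS} \) for an arbitrary stack \( \mathcal{F} \), since syndeticity of \( B \) is never used (only nonemptiness). For \( \mathsf{PS}\subseteq\mathsf{Thick}(\mathsf{Syn},\mathsf{PS}^*) \), both proofs pick \( q\in K(\beta S)\cap\overline{A} \) and set \( B=\{x\in S : x^{-1}A\in q\} \); the paper simply cites \cite[Theorem~4.39]{Hindman:2012tq} for the syndeticity of \( B \), whereas you re-derive it from \cref{theorem:algebraic-characterizations}(a) via the standard left-ideal argument (\( L'\cdot q \) is a left ideal inside the minimal left ideal \( L\ni q \), hence equals \( L \), hence \( L'\cap\overline{B}\ne\emptyset \)). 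That re-derivation is correct; it just proves the cited lemma instead of invoking it. In short: same skeleton, but a more elementary and more general combinatorial half, and a self-contained algebraic half.
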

\begin{proof}
    \textbf{(a)}
     It is a result of Bergelson, Hindman, and McCutcheon \cite[Theorem~2.4(d)]{Bergelson1998} (or also see \cite[Theorem~4.49]{Hindman:2012tq}) that \( A \in \mathsf{PS} \) if and only if there exists \( H \in \mathcal{P}_f(S) \) such that \( \bigcup_{h \in H} h^{-1}A \) is thick.
    
    \textbf{(b)}
    \( (\subseteq) \)
    Let \( A \in \mathsf{PS} \).
    By \cite[Theorem~4.40]{Hindman:2012tq}, we have that \( A \) is piecewise syndetic if and only if \( K(\beta S) \cap \overline{A} \ne \emptyset \).
    Pick \( q \in K(\beta S) \cap \overline{A} \) as guaranteed, and by \cite[Theorem~4.39]{Hindman:2012tq}, we have \(\{x \in S : x^{-1}A \in q \} \) is syndetic.
    Putting \( B = \{x \in S : x^{-1}A \in q \} \) we see that for every \( H \in \mathcal{P}_f(B) \) we have that \( \bigcap_{x \in H} x^{-1}A \in q \subseteq \mathsf{PS} \), since \( q \) is an ultrafilter in \( K(\beta S) \).
    Hence \( A \in \mathsf{Thick}(\mathsf{Syn}, \mathsf{PS}^*) \).

      \( (\supseteq) \)
      We'll provide a combinatorial proof of this direction. 
    Let \( A \in \mathsf{Thick}(\mathsf{Syn}, \mathsf{PS}^*) \) and pick \( B \in \mathsf{Syn} \) as guaranteed for \( A \).
  Pick \( H \in \mathcal{P}_f(S) \) such that \( \bigcup_{h \in H} h^{-1}B = S \).
  We claim that \( \bigcup_{h \in H} h^{-1}A \) is thick.
  
  Let \( F \in \mathcal{P}_f(S) \) and for each \( f \in F \) pick \( h_f \in H \) such that \( h_f \cdot f \in B \).
  Then \( \{ h_f \cdot f : f \in F \} \) is a finite nonempty subset of \( B \). 
  We can pick \( X \in \mathsf{PS} \) such that \( \{ h_f \cdot f : f \in F \} \cdot X \subseteq A \). 
  It follows that \( F \cdot X \subseteq \bigcup_{h \in H} h^{-1}A \) and so \( A \in \mathsf{PS} \).    
  
  This completes the proof of statement (b).

    \textbf{(c)} 
    Applying \cref{proposition:duality-principle} to statement (a) yields  \( \mathsf{PS}^* = \mathsf{Thick}(\{S\}, \mathsf{Thick}) \).
	
    \textbf{(d)} 
    Combining statements (b) and (c) immediately proves this equivalence.
\end{proof}

A combinatorial proof of a special case (in our notation, \( \mathsf{PS} = \mathsf{Thick}(\mathsf{Syn}, \{S\}) \)) of statement (a), in the context of \( (\mathbb{N}, +) \), is given in \cite[Lemma~5]{Moreira:2016uz}.
The characterization for \( \mathsf{PS}^* \), at least in the context of a group, appears at least as early as \cite[Definition~1.4]{Bergelson1988} under the name ``permanently syndetic''.
It appears that statement (d) is a new characterization of piecewise syndetic sets.
(We wouldn't be surprised if this latter characterization or its dual has appeared previously in the literature or folklore.)

As this result illustrates, it is not immediately clear how all of these notions are related.
Moreover, developing an algebraic theory of their classification or characterization seems it could be a challenging but important project.
(Important because the algebraic characterizations of notions of size indicate the underlying structure of ``large'' sets.)
Therefore we propose the following characterization problem:
\begin{problem}[Characterization problem on composed notions of size]
    Develop an algebraic characterization of composed notions of size.
    Given stacks \( \mathcal{F} \) and \( \mathcal{G} \) is it possible to develop a systemic  characterization of \( \mathsf{Syn}(\mathcal{F}, \mathcal{G}) \) and \(\mathsf{Thick}(\mathcal{F}, \mathcal{G}) \) using the algebraic structure of \( \beta S \)?
\end{problem}

In \cref{lemma:algebraic-characterizations} we solve special instances of the above ``characterization problem'' in terms of closed subsets of \( \beta S \), but, as \cref{proposition:characterization-of-PS-as-broken-syndetic} indicates, we believe more can be said on this point.
\begin{lemma}
  \label{lemma:algebraic-characterizations}
  Let \( \mathcal{F} \) and \( \mathcal{G} \) both be filters on a semigroup \( S \).
  \begin{itemize}
    \item[(a)]
      \( \mathsf{Thick}(\mathcal{F}, \mathcal{G}) = \{ A \subseteq S : \mbox{there exists } q \in \overline{\mathcal{G}} \mbox{ such that } \overline{\mathcal{F}} \cdot q \subseteq \overline{A} \} \)
    
    \item[(b)]
      \( \mathsf{Thick}(\mathcal{F}, \mathcal{G}^*) = \{ A \subseteq S : \overline{\mathcal{F} \cdot \mathcal{G}} \subseteq \overline{A} \} \)

    \item[(c)]
      \( \mathsf{Thick}(\mathcal{F}^*, \mathcal{G}) = \{ A \subseteq S : \overline{\mathcal{F}} \cdot \overline{\mathcal{G}} \cap \overline{A} \ne \emptyset \} \)      

    \item[(d)]
      \( \mathsf{Thick}(\mathcal{F}^*, \mathcal{G}^*) = \{ A \subseteq S : \mbox{there exists } p \in \overline{\mathcal{F}} \mbox{ such that } \overline{p \cdot \mathcal{G}} \subseteq \overline{A} \} \)

    \item[(a\(^ \prime \))]
      \( \mathsf{Syn}(\mathcal{F}, \mathcal{G}) = \{ A \subseteq S : \mbox{for every } q \in \overline{\mathcal{G}} \mbox{ we have } \overline{\mathcal{F}} \cdot q \cap \overline{A} \ne \emptyset \} \)

    \item[(b\(^ \prime \))]
      \( \mathsf{Syn}(\mathcal{F}, \mathcal{G}^*) = \{ A \subseteq S : \overline{\mathcal{F} \cdot \mathcal{G}} \cap \overline{A} \ne \emptyset \} \)

    \item[(c\(^ \prime \))]
      \( \mathsf{Syn}(\mathcal{F}^*, \mathcal{G}) = \{ A \subseteq S : \overline{\mathcal{F}} \cdot \overline{\mathcal{G}} \subseteq \overline{A} \} \).

    \item[(d\(^ \prime \))]
      \( \mathsf{Syn}(\mathcal{F}^*, \mathcal{G}^*) = \{ A \subseteq S : \mbox{for every } p \in \overline{\mathcal{F}} \mbox{ we have } \overline{p \cdot \mathcal{G}} \cap \overline{A} \ne \emptyset \} \).
  \end{itemize}
\end{lemma}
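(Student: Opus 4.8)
The plan is to reduce all eight statements to the four concerning $\mathsf{Thick}$, and then to prove each $\mathsf{Thick}$-statement by a short chain of equivalences translating \cref{definition:relative-syndetic-and-thick}(b) into ultrafilter language. For the reduction: each of $\mathcal{F}$, $\mathcal{F}^*$, $\mathcal{G}$, $\mathcal{G}^*$ is a stack, so \cref{proposition:duality-principle}(a) applies to each of the pairs $(\mathcal{F},\mathcal{G})$, $(\mathcal{F},\mathcal{G}^*)$, $(\mathcal{F}^*,\mathcal{G})$, $(\mathcal{F}^*,\mathcal{G}^*)$ and gives $\mathsf{Syn}(\cdot,\cdot) = \mathsf{Thick}(\cdot,\cdot)^*$; hence $A \in \mathsf{Syn}(\cdot,\cdot)$ iff $S \setminus A \notin \mathsf{Thick}(\cdot,\cdot)$. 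Applying the (already proved) $\mathsf{Thick}$-characterization to $S \setminus A$, negating the quantifiers, and using $\overline{S \setminus A} = \beta S \setminus \overline{A}$ turns every ``$\subseteq \overline{S \setminus A}$'' into ``$\cap\,\overline{A} \neq \emptyset$'' and every ``$\cap\,\overline{S\setminus A} \neq \emptyset$'' into ``$\subseteq \overline{A}$'', which is exactly the claimed primed statement in each case. So it suffices to prove (a)--(d).

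Three standard translations will be used throughout. (1) A filter equals the intersection of the ultrafilters containing it (noted in the review of the algebraic structure), so for a filter $\mathcal{H}$ and $A \subseteq S$ we have $A \in \mathcal{H} \iff \overline{\mathcal{H}} \subseteq \overline{A}$; applied to the filters $\mathcal{F} \cdot \mathcal{G}$ and $p \cdot \mathcal{G}$ (filters by the Berglund--Hindman result) and to the product formula $A \in p \cdot q \iff \{x : x^{-1}A \in q\} \in p$, this handles the right-hand sides of (b), (d), and, combined with a further unwinding over $p \in \overline{\mathcal{F}}$, of (a). (2) For a filter $\mathcal{H}$ and $C \subseteq S$, \cref{prop:mesh-operator}(b),(d) give $C \in \mathcal{H}^*$ iff $C$ meets every member of $\mathcal{H}$, i.e.\ iff $\mathcal{H} \cup \{C\}$ has the finite intersection property, i.e.\ iff some $p \in \overline{\mathcal{H}}$ has $C \in p$; this handles the ``$\in \mathcal{F}^*$'' bookkeeping in (c) and (d). (3) For $p, q \in \beta S$, $q \in \overline{\mathcal{G}}$ is equivalent to $q \subseteq \mathcal{G}^*$ by \cref{prop:mesh-operator}(g).

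With these in hand, (b) and (d) are nearly immediate: since $(\mathcal{G}^*)^* = \mathcal{G}$ is a filter, the requirement ``$\bigcap_{h \in H} h^{-1}A \in (\mathcal{G}^*)^*$ for every $H \in \mathcal{P}_f(B)$'' collapses (by closure under finite intersections) to ``$h^{-1}A \in \mathcal{G}$ for every $h \in B$'', i.e.\ $B \subseteq \{x : x^{-1}A \in \mathcal{G}\}$; since $\mathcal{F}$ (resp.\ $\mathcal{F}^*$) is upward closed, quantifying over $B$ gives $\{x : x^{-1}A \in \mathcal{G}\} \in \mathcal{F}$ (resp.\ $\in \mathcal{F}^*$), and translations (1), (2) finish (b) (resp.\ (d)). For (a) and (c) the second slot is the bare filter $\mathcal{G}$, and here lies the one real step: the condition ``$\bigcap_{h\in H} h^{-1}A \in \mathcal{G}^*$ for every $H \in \mathcal{P}_f(B)$'' does \emph{not} collapse, because $\mathcal{G}^*$ is merely a grill. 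Instead I would note that, as $\mathcal{G}$ is closed under finite intersections, this condition is precisely the assertion that the family $\{h^{-1}A : h \in B\} \cup \mathcal{G}$ has the finite intersection property, hence extends to an ultrafilter $q$; such a $q$ lies in $\overline{\mathcal{G}}$ and satisfies $B \subseteq \{x : x^{-1}A \in q\}$, and conversely any $q \in \overline{\mathcal{G}}$ with $\{x : x^{-1}A \in q\} \in \mathcal{F}$ witnesses the condition via translation (3). Thus $A$ is $(\mathcal{F},\mathcal{G})$-thick iff there is $q \in \overline{\mathcal{G}}$ with $\{x : x^{-1}A \in q\} \in \mathcal{F}$ (proving (a)), and $(\mathcal{F}^*,\mathcal{G})$-thick iff there is $q \in \overline{\mathcal{G}}$ with $\{x : x^{-1}A \in q\} \in \mathcal{F}^*$ (proving (c) after feeding this through translation (2)).

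The hard part is not deep: it is exactly this repackaging of a ``for all finite $H$'' mesh condition as a single finite-intersection-property statement so that an ultrafilter $q$ can be extracted, plus the bookkeeping needed to keep the eight quantifier negations straight in the duality reduction. Everything else is unwinding definitions against \cref{prop:mesh-operator} and the standard facts about $\beta S$ recorded in \cref{section:preliminaries}.
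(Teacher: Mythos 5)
Your plan is correct and follows essentially the same route as the paper: the paper also reduces all primed cases to the $\mathsf{Thick}$ cases via \cref{proposition:duality-principle}, proves (a) and (c) by extending the finite-intersection family $\{h^{-1}A : h \in B\} \cup \mathcal{G}$ to an ultrafilter $q \in \overline{\mathcal{G}}$ (citing the ultrafilter-existence theorem where you spell out the FIP argument), and proves (b) and (d) by the collapse of the condition over a filter to the single containment $B \subseteq \{x : x^{-1}A \in \mathcal{G}\}$. The translations you list for moving between filter membership, set containment of closures, and membership in $\mathcal{F}^*$ via an ultrafilter in $\overline{\mathcal{F}}$ are exactly the bookkeeping steps the paper uses implicitly.
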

\begin{proof}
    By \cref{proposition:duality-principle} statements (a), (b), (c), and (d) are equivalent to the corresponding statements (a\(^ \prime \)), (b\(^ \prime \)), (c\(^ \prime \)), and (d\(^ \prime \)), respectively.
    
    \textbf{(a)}
     \( (\subseteq) \)
  Let \( A \in \mathsf{Thick}(\mathcal{F}, \mathcal{G}) \)  and pick \( B \in \mathcal{F} \) as guaranteed for \( A \).
  By \cite[Theorem~3.11]{Hindman:2012tq}, we can pick \( q \in \overline{\mathcal{G}} \) with \( \{x^{-1}A : x \in B \} \subseteq q \).
  If \( p \in \overline{\mathcal{F}} \), then \( B \in p \) (since \( \mathcal{F} \subseteq p \)).
  Now \( B \subseteq \{x \in S : x^{-1}A \in q\} \) implies \( \{x \in S : x^{-1}A \in q\} \in p \), that is, \( A \in p \cdot q \).
  Hence \( \overline{\mathcal{F}}\cdot q \subseteq \overline{A} \).
  
  \( (\supseteq) \)
  Now assume we have \( A \subseteq S \) such that there exists \( q \in \overline{\mathcal{G}} \) with \( \overline{\mathcal{F}}\cdot q \subseteq \overline{A} \).
  Then \( A \in p \cdot q \) for every \( p \in \overline{\mathcal{F}} \), that is, \( \{ x \in S : x^{-1}A \in q \} \in p \) for every \( p \in \overline{\mathcal{F}} \).
  Therefore \( \{ x \in S : x^{-1}A \in q \} \in \mathcal{F} \), and if we put \( B = \{ x \in S : x^{-1}A \in q \} \) and let \( H \in \mathcal{P}_f(B) \) we have \( \bigcap_{h \in H} h^{-1}A \in q \subseteq \mathcal{G}^* \).
  Hence \( A \in \mathsf{Thick}(\mathcal{F}, \mathcal{G}) \).

  This completes the proof of statement (a).

    \textbf{(b)}
    Recall, by \cite[Lemma~5.15]{Berglund1984}, we have \( \mathcal{F} \cdot \mathcal{G} \) is a filter on \( S \).
    
     \( (\subseteq) \)
  Let \( A \in \mathsf{Thick}(\mathcal{F}, \mathcal{G}^*) \) and pick \( B \in \mathcal{F} \) as guaranteed for \( A \), that is, for all \( H \in \mathcal{P}_f(B) \) we have \( \bigcap_{h \in H} h^{-1}A \in \mathcal{G} \).
  Then  \( B \subseteq \{x \in S : x^{-1}A \in \mathcal{G} \} \) and \( B \in \mathcal{F} \) implies \( \{x \in S : x^{-1}A \in \mathcal{G} \} \in \mathcal{F} \), that is, \( A \in \mathcal{F} \cdot \mathcal{G} \).
  Hence \( \overline{\mathcal{F} \cdot \mathcal{G}} \subseteq \overline{A} \).
  
     \( (\supseteq) \)
  Now assume we have \( A \subseteq S \) such that \( \overline{\mathcal{F} \cdot \mathcal{G}} \subseteq \overline{A} \).
  Then \( A \in \mathcal{F} \cdot \mathcal{G} \), that is, \( \{ x \in S : x^{-1}A \in \mathcal{G} \} \in \mathcal{F} \).
  Put \( B = \{ x \in S : x^{-1}A \in \mathcal{G} \} \) and let \( H \in \mathcal{P}_f(B) \).
  We have \( \bigcap_{h \in H} h^{-1}A \in \mathcal{G} \) and hence \( A \in \mathsf{Thick}(\mathcal{F}, \mathcal{G}^*) \).
  
  This completes the proof of statement (b).
  
  \textbf{(c)}
  \( (\subseteq) \)
  Let \( A \in \mathsf{Thick}(\mathcal{F}^*, \mathcal{G}) \) and pick \( B \in \mathcal{F}^* \) as guaranteed for \( A \).
  Similar to our proof for statement (a), by \cite[Theorem~3.11]{Hindman:2012tq}, we can pick \( q \in \overline{G} \) such that \( \{x^{-1}A : x \in B\} \subseteq q \).
  Pick \( p \in \overline{\mathcal{F}} \) with \( B \in p \).
  Then \( B \subseteq \{ x \in S : x^{-1}A \in q \} \) and \( B \in p \) implies \( \{x \in S : x^{-1}A \in q\} \in p \), that is, \( A \in p \cdot q \).

  \( (\supseteq) \)
  Now assume we have \( A \subseteq S \) such that that \( \overline{\mathcal{F}} \cdot \overline{\mathcal{G}} \cap \overline{A} \ne \emptyset \).
  Pick \( p \in \overline{\mathcal{F}} \) and \( q \in \overline{\mathcal{G}} \) with \( p \cdot q \in \overline{A} \), that is, \( \{ x \in S : x^{-1}A \in q \} \in p \).
  Put \( B = \{ x \in S : x^{-1}A \in q \} \) and let \( H \in \mathcal{P}_f(B) \).
  Then \( \bigcap_{h \in H} h^{-1}A \in q \subseteq \mathcal{G}^* \).
  Hence \( A \) is \( (\mathcal{F}^{*}, \mathcal{G}) \)-thick.

  This completes the proof of statement (c).

  \textbf{(d)}
  \( (\subseteq) \)
  Let \( A \in \mathsf{Thick}(\mathcal{F}^{*}, \mathcal{G}^{*}) \) and pick \( B \in \mathcal{F}^{*} \) as guaranteed for \( A \).
  Then it follows that \( B \subseteq \{x \in S : x^{-1}A \in \mathcal{G} \} \), since \( \mathcal{G} \) is a filter.
  Pick \( p \in \overline{\mathcal{F}} \) with \( B \in p \).
  Then \( A \in p \cdot \mathcal{G} \), that is, \( \overline{p \cdot \mathcal{G}} \subseteq \overline{A} \).

  \( (\supseteq) \)    
  Now assume we have \( A \subseteq S \) such that \( \overline{p \cdot \mathcal{G}} \subseteq \overline{A} \) for some \( p \in \overline{\mathcal{F}} \).
  Then \( \{x \in S : x^{-1}A \in \mathcal{G} \} \in p \).
  Put \( B = \{x \in S : x^{-1}A \in \mathcal{G} \} \) (so \( B \in p \subseteq \mathcal{F}^{*} \)) and let \( H \in \mathcal{P}_f(B) \).
  Then \( \bigcap_{h \in H} h^{-1}A \in \mathcal{G} \) and hence \( A \in \mathsf{Thick}(\mathcal{F}^*, \mathcal{G}^*) \).

  This completes the proof of statement (d).
\end{proof}

When \( \mathcal{F} = \mathcal{G} = \{S\} \), \cref{lemma:algebraic-characterizations}(a) and (a\( ^\prime \)) imply the characterizations for thick and syndetic given earlier in \cref{theorem:algebraic-characterizations}.
\cref{lemma:algebraic-characterizations} also generalizes a characterization for \( \mathsf{Syn}(\mathcal{F}, \mathcal{G}) \), when both \( \mathcal{F} \) and \( \mathcal{G} \) are filters and \( \mathcal{F} \subseteq \mathcal{G} \), proved by Shuungula, Zelenyuk, and Zelenyuk \cite[Lemma~2.1]{Shuungula:2009ty}.
Additionally, it generalizes the characterizations for \( \tau \)-large and \( \tau \)-thick in \cite[Theorems~2.1 and 2.2]{Protasov:2015uz}, and generalizes Blass and Di Nasso's formulations of finite embeddability \cite[Theorem~4]{Blass2016}.

The remaining characterizations appear to be mostly new, but we suspect that they appear both explicitly and (in a sense, necessarily) implicitly in much of the literature on algebra in \( \beta S \).

If \( p, q \in \beta S \), then \( A \) is \( (p, q) \)-thick if and only if \( A \) is \( (p, q) \)-syndetic if and only if \( A \in p \cdot q \), that is, all of these notions collapse to a product of two ultrafilters when we substitute \( p, q \) for \( \mathcal{F}, \mathcal{G} \).
Hence \cref{lemma:algebraic-characterizations} can be thought of as a generalization of \cite[Theorem~4.12]{Hindman:2012tq}, but, of course, in the proof of our lemma we used this latter result implicitly throughout.

One convenient aspect of \cref{lemma:algebraic-characterizations} is that we can easily create new filters and hence describe the corresponding closed subsets of  \( \beta S \) in a ``combinatorial way''.
We note two important examples of this in the following theorem:
\begin{theorem}
  \label{theorem:important-filters}
  Let  \( \mathcal{F} \) and  \( \mathcal{G} \) both be filters on a semigroup  \( S \).
  \begin{itemize}
    \item[(a)]  \( \mathsf{Syn}(\mathcal{F}^*, \mathcal{G}) \) is a filter on  \( S \) and  \( \overline{\mathsf{Syn}(\mathcal{F}^*, \mathcal{G})} = c\ell(\overline{\mathcal{F}} \cdot \overline{\mathcal{G}}) \).
    \item[(b)]  \( \mathsf{Thick}(\mathcal{F}, \mathcal{G}^*) \) is a filter on  \( S \) and  \( \overline{\mathsf{Thick}(\mathcal{F}, \mathcal{G}^*)} = \overline{\mathcal{F} \cdot \mathcal{G}} \).
  \end{itemize}
\end{theorem}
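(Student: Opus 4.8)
The plan is to reduce both parts to a single elementary observation about closed subsets of $\beta S$ and then read off the conclusions directly from \cref{lemma:algebraic-characterizations}. First I would record the following principle: for any \emph{nonempty} subset $D \subseteq \beta S$, the collection $\mathcal{H}_D := \{A \subseteq S : D \subseteq \overline{A}\}$ equals $\bigcap\{p \in \beta S : p \in D\}$, so it is a (proper) filter on $S$ — an intersection of ultrafilters is a filter, and $\emptyset \notin \mathcal{H}_D$ precisely because $D \ne \emptyset$ — and moreover $\overline{\mathcal{H}_D} = c\ell_{\beta S}(D)$. The identity $\mathcal{H}_D = \bigcap\{p : p \in D\}$ is immediate from the fact that $p \in \overline{A} \iff A \in p$, and since the sets $\overline{A}$ form a clopen basis for $\beta S$ we have $c\ell_{\beta S}(D) = \bigcap\{\overline{A} : D \subseteq \overline{A}\} = \bigcap\{\overline{A} : A \in \mathcal{H}_D\} = \overline{\mathcal{H}_D}$. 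In the language of \cref{section:algebraic-review}, $\mathcal{H}_D$ is exactly the filter that generates the nonempty closed set $c\ell_{\beta S}(D)$.

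For statement (a) I would apply this with $D = \overline{\mathcal{F}} \cdot \overline{\mathcal{G}}$; this is nonempty because $\overline{\mathcal{F}}$ and $\overline{\mathcal{G}}$ are nonempty (as $\mathcal{F}$ and $\mathcal{G}$ are filters). By \cref{lemma:algebraic-characterizations}(c\(^{\prime}\)) we have $\mathsf{Syn}(\mathcal{F}^*, \mathcal{G}) = \{A \subseteq S : \overline{\mathcal{F}} \cdot \overline{\mathcal{G}} \subseteq \overline{A}\} = \mathcal{H}_D$, which is a filter by the principle above, and $\overline{\mathsf{Syn}(\mathcal{F}^*, \mathcal{G})} = \overline{\mathcal{H}_D} = c\ell_{\beta S}(\overline{\mathcal{F}} \cdot \overline{\mathcal{G}})$, as claimed. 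For statement (b) I would instead take $D = \overline{\mathcal{F} \cdot \mathcal{G}}$; recall $\mathcal{F} \cdot \mathcal{G}$ is a filter by \cite[Lemma~5.15]{Berglund1984}, so $D$ is nonempty and \emph{already closed}, whence $c\ell_{\beta S}(D) = D$. By \cref{lemma:algebraic-characterizations}(b), $\mathsf{Thick}(\mathcal{F}, \mathcal{G}^*) = \{A \subseteq S : \overline{\mathcal{F} \cdot \mathcal{G}} \subseteq \overline{A}\} = \mathcal{H}_D$; since the filter generating $D = \overline{\mathcal{F} \cdot \mathcal{G}}$ is $\mathcal{F} \cdot \mathcal{G}$ itself, this even gives $\mathsf{Thick}(\mathcal{F}, \mathcal{G}^*) = \mathcal{F} \cdot \mathcal{G}$, and $\overline{\mathsf{Thick}(\mathcal{F}, \mathcal{G}^*)} = \overline{\mathcal{F} \cdot \mathcal{G}}$.

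I do not expect a real obstacle: the theorem is essentially bookkeeping on top of \cref{lemma:algebraic-characterizations} together with the correspondence between nonempty closed subsets of $\beta S$ and proper filters on $S$. The one point worth flagging is the asymmetry between the two parts — the set $\overline{\mathcal{F}} \cdot \overline{\mathcal{G}}$ need not be closed, which is exactly why the closure operator is genuinely needed in part (a), whereas in part (b) the set $\overline{\mathcal{F} \cdot \mathcal{G}}$ is closed by construction (and the product filter $\mathcal{F} \cdot \mathcal{G}$ conveniently identifies $\mathsf{Thick}(\mathcal{F}, \mathcal{G}^*)$ on the nose). I would make sure to call attention to this distinction; beyond it the argument is short.
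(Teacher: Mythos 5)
Your proposal is correct, and for the closure identity it takes a genuinely different (and cleaner) route than the paper. Both arguments start from \cref{lemma:algebraic-characterizations}(c\(^\prime\)) and (b), but from there the paper verifies the filter axioms for \( \mathsf{Syn}(\mathcal{F}^*, \mathcal{G}) \) by hand, obtains \( c\ell(\overline{\mathcal{F}} \cdot \overline{\mathcal{G}}) \subseteq \overline{\mathsf{Syn}(\mathcal{F}^*, \mathcal{G})} \) from closedness, and then proves the reverse inclusion by chaining \cref{prop:mesh-operator}(g), \cref{proposition:duality-principle}, and \cref{lemma:algebraic-characterizations}(c): a point \( p \) extending the filter lies inside its mesh, hence inside \( \mathsf{Thick}(\mathcal{F}^*, \mathcal{G}) \), which by part (c) of the lemma forces every \( A \in p \) to meet \( \overline{\mathcal{F}} \cdot \overline{\mathcal{G}} \). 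You instead package everything into the single observation that, for nonempty \( D \subseteq \beta S \), the collection \( \mathcal{H}_D = \{A : D \subseteq \overline{A}\} = \bigcap_{p \in D} p \) is the filter generating \( c\ell_{\beta S}(D) \) — exactly the filter/closed-set correspondence the paper quotes in \cref{section:preliminaries} but does not deploy in its own proof. This buys you both the filter property and the closure identity in one stroke, makes parts (a) and (b) genuinely parallel, and gives the sharper byproduct \( \mathsf{Thick}(\mathcal{F}, \mathcal{G}^*) = \mathcal{F} \cdot \mathcal{G} \) on the nose (implicit in the proof of \cref{lemma:algebraic-characterizations}(b), but not stated). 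The one hypothesis you correctly flag — nonemptiness of \( D \) — is needed and is available here because \( \overline{\mathcal{F}} \) and \( \overline{\mathcal{G}} \) are nonempty for filters \( \mathcal{F}, \mathcal{G} \). In short: same key lemma, but your wrapper avoids the duality/mesh detour and is shorter and more uniform.
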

\begin{proof}
    \textbf{(a)}
    From \cref{lemma:algebraic-characterizations}(c\(^ \prime \)) we have \( \mathsf{Syn}(\mathcal{F}^*, \mathcal{G}) = \{ A \subseteq S : \overline{\mathcal{F}} \cdot \overline{\mathcal{G}} \subseteq \overline{A} \} \).
    With this characterization we can easily verify that \( \mathsf{Syn}(\mathcal{F}^*, \mathcal{G}) \) is a filter on \( S \).
    We have \( S \in \mathsf{Syn}(\mathcal{F}^*, \mathcal{G}) \) (since \( \overline{\mathcal{F}} \cdot \overline{\mathcal{G}} \subseteq \beta S = \overline{S} \)) and \( \emptyset \not\in \mathsf{Syn}(\mathcal{F}^*, \mathcal{G}) \) (since \(\overline{\emptyset} = \emptyset \)).
    If \( A \in \mathsf{Syn}(\mathcal{F}^*, \mathcal{G}) \) and \( A \subseteq B \subseteq S \), then \( \overline{\mathcal{F}} \cdot \overline{\mathcal{G}} \subseteq \overline{A} \subseteq \overline{B} \) and hence \( B \in \mathsf{Syn}(\mathcal{F}^*, \mathcal{G}) \).
    Finally, if \(A \), \( B \in \mathsf{Syn}(\mathcal{F}^*, \mathcal{G}) \), then \( \overline{\mathcal{F}} \cdot \overline{\mathcal{G}} \subseteq \overline{A} \cap \overline{B} = \overline{A \cap B} \) and hence \( A \cap B \in \mathsf{Syn}(\mathcal{F}^*, \mathcal{G}) \).
    
    Also from \cref{lemma:algebraic-characterizations}(c\(^ \prime \)) we have \( \overline{\mathcal{F}} \cdot \overline{\mathcal{G}} \subseteq \overline{\mathsf{Syn}(\mathcal{F}^*, \mathcal{G})} \), and since \( \overline{\mathsf{Syn}(\mathcal{F}^*, \mathcal{G})} \) is closed we have \( c\ell(\overline{\mathcal{F}} \cdot \overline{\mathcal{G}}) \subseteq \overline{\mathsf{Syn}(\mathcal{F}^*, \mathcal{G})}\).
    To see the reverse inclusion \( \overline{\mathsf{Syn}(\mathcal{F}^*, \mathcal{G})} \subseteq c\ell(\overline{\mathcal{F}} \cdot \overline{\mathcal{G}}) \) let \( p \in \overline{\mathsf{Syn}(\mathcal{F}^*, \mathcal{G})} \) and \( A \in p \).
    By \cref{prop:mesh-operator}(g), \cref{proposition:duality-principle}, and \cref{lemma:algebraic-characterizations}(c) we have  \( (\overline{\mathcal{F}} \cdot \overline{\mathcal{G}}) \cap \overline{A} \ne \emptyset \).
    Hence \( p \in c\ell(\overline{\mathcal{F}} \cdot \overline{\mathcal{G}}) \).
    
    \textbf{(b)}
    The proof of this statement is similar to (a).
    
    First, recall again from \cite[Lemma~5.15]{Berglund1984}, we have \( \mathcal{F} \cdot \mathcal{G} \) is a filter on \( S \) and so \( \overline{\mathcal{F} \cdot \mathcal{G}} \) is nonempty closed subset of \( \beta S \).
    From \cref{lemma:algebraic-characterizations}(b) we have \( \mathsf{Thick}(\mathcal{F}, \mathcal{G}^*)  = \{ A \subseteq S : \overline{\mathcal{F} \cdot \mathcal{G}} \subseteq \overline{A} \} \) and we can use this characterization to verify \( \mathsf{Thick}(\mathcal{F}, \mathcal{G}^*) \) is a filter on \( S \).
    We have \( S \in \mathsf{Thick}(\mathcal{F}, \mathcal{G}^*) \) (since \( \overline{\mathcal{F} \cdot \mathcal{G}} \subseteq \beta S = \overline{S} \)) and \( \emptyset \not\in \mathsf{Thick}(\mathcal{F}, \mathcal{G}^*) \) (since \( \overline{\emptyset} = \emptyset \)).
    If \( A \in \mathsf{Thick}(\mathcal{F}, \mathcal{G}^*) \) and \( A \subseteq B \subseteq S \), then \( \overline{\mathcal{F} \cdot \mathcal{G}} \subseteq \overline{A} \subseteq \overline{B} \) and hence \( B \in \mathsf{Thick}(\mathcal{F}, \mathcal{G}^*) \).
    If \( A, B \in \mathsf{Thick}(\mathcal{F}, \mathcal{G}^*) \), then \( \overline{\mathcal{F} \cdot \mathcal{G}} \subseteq \overline{A} \cap \overline{B} = \overline{A \cap B} \) and hence \( A \cap B \in \mathsf{Thick}(\mathcal{F}, \mathcal{G}^*) \). 
    
    Therefore we have \( \overline{\mathcal{F} \cdot \mathcal{G}} \subseteq \overline{\mathsf{Thick}(\mathcal{F}, \mathcal{G}^*)} \).
    To see the reverse inclusion \( \overline{\mathsf{Thick}(\mathcal{F}, \mathcal{G}^*)} \subseteq \overline{\mathcal{F} \cdot \mathcal{G}} \) let \( p \in \overline{\mathsf{Thick}(\mathcal{F}, \mathcal{G})} \) and let \( A \in p \).
    Then by \cref{prop:mesh-operator}(g), \cref{proposition:duality-principle}, and \cref{lemma:algebraic-characterizations}(b\( ^\prime \)) we have \( \overline{\mathcal{F} \cdot \mathcal{G}} \cap \overline{A} \ne \emptyset \). 
    Hence \( p \in \overline{\mathcal{F} \cdot \mathcal{G}} \).
\end{proof}

In \cite[Theorem~2.3]{davenport:1987}, Davenport and Hindman obtained a special case of \cref{theorem:important-filters}(a).
In our notation, they proved that  \( \mathsf{Syn}(p, \mathcal{C}) \) is a filter, where  \( \mathcal{C} \) is the cofinite filter on  \( \mathbb{N} \).
Protasov implicitly uses  \( \mathsf{Syn}(\mathcal{C}^*, \mathcal{C}) \), where  \( \mathcal{C} \) is the cofinite filter on an infinite group  \( G \) to characterize  \( c\ell_{\beta G}( G^* \cdot G^*) \) \cite[Theorem~3.20]{Protasov2011}, where  \( G^* \) is the collection of all non-principal ultrafilters on  \( G \).
Finally, we also note that the inclusion (again, in our notation) \( \mathsf{Thick}(\mathcal{F}, \mathcal{G}^*) \subseteq \mathsf{Syn}(\mathcal{F}^*, \mathcal{G}) \) was first proved by Berglund and Hindman \cite[Lemma~5.15]{Berglund1984}.

As a consequence of \cref{theorem:important-filters}(a) is we can easily characterize closed subsemigroups, left ideals, and right ideals of  \( \beta S \). 
These results were previously proved by Davenport in \cite{Davenport:1990wq} and, independently, by Papazyan in \cite{Papazyan1990}.
We'll derive this characterizations from a more general result:
\begin{theorem}
  \label{theorem:combinatorial-characterization-of-products}
  Let  \( \mathcal{F} \),  \( \mathcal{G} \), and  \( \mathcal{H} \) be filters on a semigroup  \( S \).
  Then  \( \overline{\mathcal{F}} \cdot \overline{\mathcal{G}} \subseteq \overline{\mathcal{H}} \) if and only if  \( \mathcal{H} \subseteq \mathsf{Syn}(\mathcal{F}^*, \mathcal{G}) \).
\end{theorem}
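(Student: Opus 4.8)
The plan is to reduce everything to \cref{lemma:algebraic-characterizations}(c\(^\prime\)), which already gives the combinatorial description \( \mathsf{Syn}(\mathcal{F}^*, \mathcal{G}) = \{ A \subseteq S : \overline{\mathcal{F}} \cdot \overline{\mathcal{G}} \subseteq \overline{A} \} \). The only other ingredient needed is the elementary observation that \( \overline{\mathcal{H}} = \bigcap_{A \in \mathcal{H}} \overline{A} \): indeed \( p \in \overline{\mathcal{H}} \) means \( \mathcal{H} \subseteq p \), which says exactly that \( A \in p \), i.e. \( p \in \overline{A} \), for every \( A \in \mathcal{H} \). In particular \( A \in \mathcal{H} \) implies \( \overline{\mathcal{H}} \subseteq \overline{A} \).

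For the forward direction, I would assume \( \overline{\mathcal{F}} \cdot \overline{\mathcal{G}} \subseteq \overline{\mathcal{H}} \) and fix an arbitrary \( A \in \mathcal{H} \). Since \( \overline{\mathcal{H}} \subseteq \overline{A} \), we get \( \overline{\mathcal{F}} \cdot \overline{\mathcal{G}} \subseteq \overline{A} \), and then \cref{lemma:algebraic-characterizations}(c\(^\prime\)) says precisely that \( A \in \mathsf{Syn}(\mathcal{F}^*, \mathcal{G}) \). As \( A \in \mathcal{H} \) was arbitrary, \( \mathcal{H} \subseteq \mathsf{Syn}(\mathcal{F}^*, \mathcal{G}) \).

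For the converse, I would assume \( \mathcal{H} \subseteq \mathsf{Syn}(\mathcal{F}^*, \mathcal{G}) \). Then by \cref{lemma:algebraic-characterizations}(c\(^\prime\)), for every \( A \in \mathcal{H} \) we have \( \overline{\mathcal{F}} \cdot \overline{\mathcal{G}} \subseteq \overline{A} \). Intersecting over all \( A \in \mathcal{H} \) gives \( \overline{\mathcal{F}} \cdot \overline{\mathcal{G}} \subseteq \bigcap_{A \in \mathcal{H}} \overline{A} = \overline{\mathcal{H}} \), as desired.

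I do not anticipate a real obstacle here: once \cref{lemma:algebraic-characterizations}(c\(^\prime\)) is in hand, this is essentially a bookkeeping statement translating "contained in the closed set \( \overline{\mathcal{H}} \)" into "contained in \( \overline{A} \) for each generator \( A \) of \( \mathcal{H} \)". The one point worth stating carefully is the identity \( \overline{\mathcal{H}} = \bigcap_{A \in \mathcal{H}} \overline{A} \) (equivalently, that \( \overline{\mathcal{H}} \subseteq \overline{A} \) whenever \( A \in \mathcal{H} \) and that a point lying in every \( \overline{A} \), \( A \in \mathcal{H} \), belongs to \( \overline{\mathcal{H}} \)), which is immediate from the definitions \( \overline{\mathcal{H}} = \{ p \in \beta S : \mathcal{H} \subseteq p\} \) and \( \overline{A} = \{ p \in \beta S : A \in p \} \) recalled in \cref{section:algebraic-review}.
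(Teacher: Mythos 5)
Your proof is correct, but it takes a genuinely different route from the paper's. The paper proves this theorem as a one-line chain of equivalences
\( \overline{\mathcal{F}} \cdot \overline{\mathcal{G}} \subseteq \overline{\mathcal{H}} \iff c\ell(\overline{\mathcal{F}} \cdot \overline{\mathcal{G}}) \subseteq \overline{\mathcal{H}} \iff \overline{\mathsf{Syn}(\mathcal{F}^*, \mathcal{G})} \subseteq \overline{\mathcal{H}} \iff \mathcal{H} \subseteq \mathsf{Syn}(\mathcal{F}^*, \mathcal{G}) \),
invoking \cref{theorem:important-filters}(a) twice: once to identify \( \overline{\mathsf{Syn}(\mathcal{F}^*, \mathcal{G})} \) with \( c\ell(\overline{\mathcal{F}} \cdot \overline{\mathcal{G}}) \), and once (implicitly) to know that \( \mathsf{Syn}(\mathcal{F}^*, \mathcal{G}) \) is a filter, so that the order-reversing correspondence between filters and their closed sets applies in the final step. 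You instead bypass \cref{theorem:important-filters}(a) entirely and argue elementwise from \cref{lemma:algebraic-characterizations}(c\(^\prime\)) plus the identity \( \overline{\mathcal{H}} = \bigcap_{A \in \mathcal{H}} \overline{A} \). This is lower-tech: you never need to know \( \mathsf{Syn}(\mathcal{F}^*, \mathcal{G}) \) is a filter, nor that its closure is \( c\ell(\overline{\mathcal{F}} \cdot \overline{\mathcal{G}}) \). The trade-off is that the paper's version makes the theorem visibly a corollary of the structural statement in \cref{theorem:important-filters}(a), whereas your version shows it follows already from the pointwise characterization in \cref{lemma:algebraic-characterizations}(c\(^\prime\)) and basic facts about \( \overline{\mathcal{H}} \). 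Both are valid; yours is self-contained modulo \cref{lemma:algebraic-characterizations}, while the paper's is shorter once \cref{theorem:important-filters}(a) is available.
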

\begin{proof}
  Since  \( \overline{\mathcal{H}} \) is closed, we have  \( \overline{\mathcal{F}} \cdot \overline{\mathcal{G}} \subseteq \overline{\mathcal{H}} \iff c\ell(\overline{\mathcal{F}} \cdot \overline{\mathcal{G}}) \subseteq \overline{\mathcal{H}} \iff \overline{\mathsf{Syn}(\mathcal{F}^*, \mathcal{G})} \subseteq \overline{\mathcal{H}} \iff \mathcal{H} \subseteq \mathsf{Syn}(\mathcal{F}^*, \mathcal{G}) \), where the middle and last equivalences use \cref{theorem:important-filters}(a).
\end{proof}

\begin{corollary}
  \label{corollary:combinatorial-characterizations}
  Let  \( \mathcal{F} \) be a filter on a semigroup  \( S \).
  \begin{itemize}
    \item[(a)]  \( \overline{\mathcal{F}} \) is a closed subsemigroup of  \( \beta S \) if and only if  \( \mathcal{F} \subseteq \mathsf{Syn}(\mathcal{F}^*, \mathcal{F}) \). 
    \item[(b)]  \( \overline{\mathcal{F}} \) is a closed left ideal of  \( \beta S \) if and only if  \( \mathcal{F} \subseteq \mathsf{Syn}(\{S\}^*, \mathcal{F}) \). 
    \item[(c)]  \( \overline{\mathcal{F}} \) is a closed right ideal of  \( \beta S \) if and only if  \( \mathcal{F} \subseteq \mathsf{Syn}(\mathcal{F}^*, \{S\}) \). 
    \item[(d)]  \( \overline{\mathcal{F}} \) is a closed (two-sided) ideal of  \( \beta S \) if and only if  \( \mathcal{F} \subseteq \mathsf{Syn}(\{S\}^*, \mathcal{F}) \cap \mathsf{Syn}(\mathcal{F}^*, \{S\}) \). 
  \end{itemize}
\end{corollary}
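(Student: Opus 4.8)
The plan is to deduce all four statements directly from \cref{theorem:combinatorial-characterization-of-products} by choosing the three filters appropriately and using the identification \( \beta S = \overline{\{S\}} \). Recall that a nonempty subset \( C \) of \( \beta S \) is a subsemigroup precisely when \( C \cdot C \subseteq C \), a left ideal when \( \beta S \cdot C \subseteq C \), a right ideal when \( C \cdot \beta S \subseteq C \), and a two-sided ideal when it is both. Since \( \mathcal{F} \) is a filter (in particular proper, as \( \emptyset \not\in \mathcal{F} \)), the set \( \overline{\mathcal{F}} \) is a nonempty closed subset of \( \beta S \), so there is no degeneracy to worry about.

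For (a), I would apply \cref{theorem:combinatorial-characterization-of-products} with \( \mathcal{G} = \mathcal{H} = \mathcal{F} \): then \( \overline{\mathcal{F}} \) is a closed subsemigroup iff \( \overline{\mathcal{F}} \cdot \overline{\mathcal{F}} \subseteq \overline{\mathcal{F}} \) iff \( \mathcal{F} \subseteq \mathsf{Syn}(\mathcal{F}^*, \mathcal{F}) \). For (b), I would write \( \beta S = \overline{\{S\}} \) and apply the theorem with first filter \( \{S\} \), second filter \( \mathcal{F} \), and \( \mathcal{H} = \mathcal{F} \): then \( \overline{\mathcal{F}} \) is a closed left ideal iff \( \overline{\{S\}} \cdot \overline{\mathcal{F}} \subseteq \overline{\mathcal{F}} \) iff \( \mathcal{F} \subseteq \mathsf{Syn}(\{S\}^*, \mathcal{F}) \). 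For (c), I would symmetrically apply the theorem with second filter \( \{S\} \): then \( \overline{\mathcal{F}} \) is a closed right ideal iff \( \overline{\mathcal{F}} \cdot \overline{\{S\}} \subseteq \overline{\mathcal{F}} \) iff \( \mathcal{F} \subseteq \mathsf{Syn}(\mathcal{F}^*, \{S\}) \). Finally, (d) is immediate: a closed two-sided ideal is exactly a set that is both a closed left and a closed right ideal, so (b) and (c) combine to give \( \mathcal{F} \subseteq \mathsf{Syn}(\{S\}^*, \mathcal{F}) \cap \mathsf{Syn}(\mathcal{F}^*, \{S\}) \).

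I do not expect a genuine obstacle here; this is essentially bookkeeping once \cref{theorem:combinatorial-characterization-of-products} is available. The only points deserving a moment's care are the identification \( \beta S = \overline{\{S\}} \) (so that \( \beta S \) is legitimately of the form \( \overline{\mathcal{H}} \) required by the hypotheses), and the fact that \( \beta S \cdot \overline{\mathcal{F}} \) need not itself be closed — but this is harmless, since \cref{theorem:combinatorial-characterization-of-products} already absorbs the passage to the closure via the hypothesis that the containing set \( \overline{\mathcal{F}} \) is closed.
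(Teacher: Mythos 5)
Your proposal is correct and matches the paper's proof essentially line for line: both deduce (a)–(c) by substituting \((\mathcal{F},\mathcal{F},\mathcal{F})\), \((\{S\},\mathcal{F},\mathcal{F})\), and \((\mathcal{F},\{S\},\mathcal{F})\) respectively into \cref{theorem:combinatorial-characterization-of-products}, using that \(\overline{\mathcal{F}}\) is closed to pass through the closure of the product, and obtain (d) as the conjunction of (b) and (c). Your remark that \(\beta S \cdot \overline{\mathcal{F}}\) need not be closed but that this is absorbed by the theorem is exactly the point the paper's chain of equivalences handles in its middle step.
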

\begin{proof}
    In the justifications of (a), (b), and (c) below each of the second equivalences follows since \( \overline{\mathcal{F}} \) is closed and each of the third equivalences follows from  \cref{theorem:combinatorial-characterization-of-products}:
    \begin{itemize}
        \item[\textbf{(a)}] \(\overline{\mathcal{F}} \) is a subsemigroup \( \iff \overline{\mathcal{F}} \cdot \overline{\mathcal{F}} \subseteq \overline{\mathcal{F}} \iff  c\ell(\overline{\mathcal{F}} \cdot \overline{\mathcal{F}}) \subseteq \overline{\mathcal{F}} \iff \mathcal{F} \subseteq \mathsf{Syn}(\mathcal{F}^*, \mathcal{F}) \).
        
        \item[\textbf{(b)}] \(\overline{\mathcal{F}} \) is a left ideal \( \iff \beta S \cdot \overline{\mathcal{F}} \subseteq \overline{\mathcal{F}} \iff  c\ell(\beta S \cdot \overline{\mathcal{F}}) \subseteq \overline{\mathcal{F}} \iff \mathcal{F} \subseteq \mathsf{Syn}(\{S\}^*, \mathcal{F}) \).
         
        \item[\textbf{(c)}] \(\overline{\mathcal{F}} \) is a right ideal \( \iff \overline{\mathcal{F}} \cdot \beta S \subseteq \overline{\mathcal{F}} \iff  c\ell(\overline{\mathcal{F}} \cdot \beta S) \subseteq \overline{\mathcal{F}} \iff \mathcal{F} \subseteq \mathsf{Syn}(\mathcal{F}^*, \{S\}) \).
         
         \item[\textbf{(d)}]
         This follows directly from statements~(b) and (c).
    \end{itemize}
\end{proof}

\section{Relative notions of piecewise syndetic sets}
\label{section:relative-piecewise-syndetic}
In this section, inspired by \cref{corollary:brown-lemma}, we define relative piecewise syndetic sets in such a way that \emph{any} such collection forms a grill on \( S \). 
We show, under the conditions considered in \cite{Shuungula:2009ty}, that our definition of relative piecewise syndetic satisfies Shuungula, Zelenyuk, and Zelenyuk's earlier previously defined notion of relative piecewise syndetic.
Whether the converse implication is true is stated as an open question.

\begin{definition}
  \label{definition:piecewise-FG-syndetic}
  Let \( A \subseteq S \) and let \( \mathcal{F} \) and \( \mathcal{G} \) both be stacks on \( S \).
  \begin{itemize}
    \item[(a)] \( A \) is \define{piecewise \( (\mathcal{F}, \mathcal{G}) \)-syndetic} if and only if there exist \( B \in \mathsf{Syn}(\mathcal{F}, \mathcal{G}) \) and \( C \in \mathsf{Thick}(\mathcal{F}, \mathcal{G}) \) such that \( A = B \cap C \).
    \item[(b)] We also define the collection \( \mathsf{PS}(\mathcal{F}, \mathcal{G}) = \{ A \subseteq S : A \text{ is piecewise } (\mathcal{F}, \mathcal{G})\text{-syndetic} \} \).
  \end{itemize}
\end{definition}

Note that we have \( \mathsf{PS} = \mathsf{PS}(\{S\}, \{S\}) \).
Of course, our main motivation in defining piecewise \((\mathcal{F}, \mathcal{G})\)-syndetic sets as an intersection of \( (\mathcal{F}, \mathcal{G}) \)-syndetic and \( (\mathcal{F}, \mathcal{G}) \)-thick sets is we can apply \cref{prop:mesh-operator}(h) to obtain a generalization of \cref{corollary:brown-lemma}:
\begin{theorem}
  \label{theorem:brown-lemma-relative-piecewise-syndetic}
  Let \( \mathcal{F} \) and \( \mathcal{G} \) both be stacks on \( S \).
  Then \( \mathsf{PS}(\mathcal{F}, \mathcal{G}) \) is a grill on \( S \) with \( \mathsf{Syn}(\mathcal{F}, \mathcal{G}) \subseteq \mathsf{PS}(\mathcal{F}, \mathcal{G}) \) and \( \mathsf{Thick}(\mathcal{F}, \mathcal{G}) \subseteq \mathsf{PS}(\mathcal{F}, \mathcal{G}) \).
\end{theorem}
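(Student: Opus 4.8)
The plan is to deduce everything from \cref{prop:mesh-operator}(h) together with the duality principle \cref{proposition:duality-principle}(a). Recall that, by \cref{definition:piecewise-FG-syndetic}, the collection \( \mathsf{PS}(\mathcal{F}, \mathcal{G}) \) is exactly \( \{\, B \cap C : B \in \mathsf{Syn}(\mathcal{F}, \mathcal{G}) \text{ and } C \in \mathsf{Thick}(\mathcal{F}, \mathcal{G}) \,\} \), and by \cref{proposition:duality-principle}(a) we have \( \mathsf{Syn}(\mathcal{F}, \mathcal{G}) = \mathsf{Thick}(\mathcal{F}, \mathcal{G})^* \). So the whole theorem will fall out of \cref{prop:mesh-operator}(h) once we know that \( \mathsf{Thick}(\mathcal{F}, \mathcal{G}) \) is a stack on \( S \), since that proposition takes a stack as its input and returns precisely a grill of the desired shape containing the stack and its mesh.

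Thus the first step is the (routine) verification that \( \mathcal{E} := \mathsf{Thick}(\mathcal{F}, \mathcal{G}) \) is a stack on \( S \). For \( \emptyset \ne \mathcal{E} \): since \( h^{-1}S = S \in \mathcal{G}^* \) for every \( h \) and \( \mathcal{F} \ne \emptyset \), we get \( S \in \mathcal{E} \). For \( \emptyset \notin \mathcal{E} \): because \( \emptyset \notin \mathcal{F} \), every \( B \in \mathcal{F} \) is nonempty, so choosing a singleton \( H = \{h\} \subseteq B \) gives \( \bigcap_{h \in H} h^{-1}\emptyset = \emptyset \notin \mathcal{G}^* \). Upward closure is immediate: if \( A \subseteq A' \subseteq S \) then \( \bigcap_{h \in H} h^{-1}A \subseteq \bigcap_{h \in H} h^{-1}A' \), and \( \mathcal{G}^* \) is a stack by \cref{prop:mesh-operator}(a), so the witnessing \( B \) for \( A \) also witnesses \( A' \). (Dually, \( \mathsf{Syn}(\mathcal{F}, \mathcal{G}) = \mathcal{E}^* \) is then automatically a stack as well, by \cref{prop:mesh-operator}(a).)

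With \( \mathcal{E} \) known to be a stack, the proof concludes in one line. Using commutativity of intersection and \cref{proposition:duality-principle}(a), we have \( \mathsf{PS}(\mathcal{F}, \mathcal{G}) = \{\, B \cap C : B \in \mathcal{E}^* \text{ and } C \in \mathcal{E} \,\} = \{\, B \cap C : B \in \mathcal{E} \text{ and } C \in \mathcal{E}^* \,\} \); and \cref{prop:mesh-operator}(h) applied to the stack \( \mathcal{E} \) says exactly that this collection is a grill on \( S \) containing \( \mathcal{E} = \mathsf{Thick}(\mathcal{F}, \mathcal{G}) \) and \( \mathcal{E}^* = \mathsf{Syn}(\mathcal{F}, \mathcal{G}) \). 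There is no real obstacle here, since the combinatorial content was front-loaded into \cref{prop:mesh-operator}(h) and the duality principle; the only point one must not omit is the verification that \( \mathsf{Thick}(\mathcal{F}, \mathcal{G}) \) is a stack, which is exactly the hypothesis needed to invoke \cref{prop:mesh-operator}(h) and is where the conditions \( \emptyset \notin \mathcal{F} \) and "\( \mathcal{G}^* \) is a stack" are used.
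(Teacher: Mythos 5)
Your proposal is correct and takes essentially the same route as the paper: reduce to \cref{prop:mesh-operator}(h) via the definition of \(\mathsf{PS}(\mathcal{F},\mathcal{G})\) and the duality \(\mathsf{Syn}(\mathcal{F},\mathcal{G}) = \mathsf{Thick}(\mathcal{F},\mathcal{G})^*\) from \cref{proposition:duality-principle}(a). You additionally spell out the verification that \(\mathsf{Thick}(\mathcal{F},\mathcal{G})\) is a stack, which the paper leaves implicit but which is indeed the hypothesis required to invoke \cref{prop:mesh-operator}(h), so your version is, if anything, a bit more complete.
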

\begin{proof}
  By definition \( \mathsf{PS}(\mathcal{F}, \mathcal{G}) = \{ B \cap C : B \in \mathsf{Syn}(\mathcal{F}, \mathcal{G}) \mbox{ and } C \in \mathsf{Thick}(\mathcal{F}, \mathcal{G}) \} \), and
  by \cref{proposition:duality-principle} we have \( \mathsf{Syn}(\mathcal{F}, \mathcal{G}) = \mathsf{Thick}(\mathcal{F}, \mathcal{G})^* \).
  Hence from \cref{prop:mesh-operator}(h) it follows that \( \mathsf{PS}(\mathcal{F}, \mathcal{G}) \) is a grill on \( S \) that contains both \( \mathsf{Syn}(\mathcal{F}, \mathcal{G}) \) and \( \mathsf{Thick}(\mathcal{F}, \mathcal{G}) \).
\end{proof}

Similar to \cref{lemma:algebraic-characterizations}, if we assume \( \mathcal{F} \) and \( \mathcal{G} \) are both filters on \( S \), then we can (partially) solve a few special instances of the classification problem for relative piecewise syndetic sets:

\begin{proposition}
    \label{proposition:algebraic-characterization-relative-PS-easy-cases}
    Let \( \mathcal{F} \) and \( \mathcal{G} \) both be filters on \( S \).
    \begin{itemize}
        \item[(a)] \( \mathsf{PS}(\mathcal{F}, \mathcal{G}) \subseteq \mathsf{Thick}(\mathcal{F}^*, \mathcal{G}) \) and the inclusion can be strict.
        
        \item[(b)] \( \mathsf{PS}(\mathcal{F}, \mathcal{G}^*) = \mathsf{Syn}(\mathcal{F}, \mathcal{G}^*) \)
    
        \item[(c)] \( \mathsf{PS}(\mathcal{F}^*, \mathcal{G}) = \mathsf{Thick}(\mathcal{F}^*, \mathcal{G}) \)
    
        \item[(d)] \( \mathsf{PS}(\mathcal{F}^*, \mathcal{G}^*) \subseteq \{ A \subseteq S : \mbox{ there exists } p \in \overline{\mathcal{F}} \mbox{ such that } \overline{p \cdot \mathcal{G}}  \cap \overline{A} \ne \emptyset \} \).
  \end{itemize}    
\end{proposition}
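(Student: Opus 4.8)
The plan is to read off all four statements from the algebraic descriptions in \cref{lemma:algebraic-characterizations}, the definition \( \mathsf{PS}(\mathcal{F},\mathcal{G}) = \{\, B \cap C : B \in \mathsf{Syn}(\mathcal{F},\mathcal{G}) \text{ and } C \in \mathsf{Thick}(\mathcal{F},\mathcal{G}) \,\} \) from \cref{definition:piecewise-FG-syndetic}, the inclusions \( \mathsf{Syn}(\mathcal{F},\mathcal{G}) \subseteq \mathsf{PS}(\mathcal{F},\mathcal{G}) \) and \( \mathsf{Thick}(\mathcal{F},\mathcal{G}) \subseteq \mathsf{PS}(\mathcal{F},\mathcal{G}) \) from \cref{theorem:brown-lemma-relative-piecewise-syndetic}, and the identity \( \overline{B \cap C} = \overline{B} \cap \overline{C} \). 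In each of the four coordinate substitutions the point will be that \cref{lemma:algebraic-characterizations} describes one of \( \mathsf{Syn} \), \( \mathsf{Thick} \) as ``a fixed closed set \( X \) is \emph{contained} in \( \overline{A} \)'' and the other as ``\( X \) \emph{meets} \( \overline{A} \)'', so that an intersection \( B \cap C \) of a ``meets'' set with a ``contains'' set again ``meets'' \( X \).

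For (b) and (c) this is immediate. In case (b), \cref{lemma:algebraic-characterizations}(b\('\)) says \( \mathsf{Syn}(\mathcal{F},\mathcal{G}^*) = \{A : \overline{\mathcal{F}\cdot\mathcal{G}}\cap\overline{A}\neq\emptyset\} \) and (b) says \( \mathsf{Thick}(\mathcal{F},\mathcal{G}^*) = \{A : \overline{\mathcal{F}\cdot\mathcal{G}}\subseteq\overline{A}\} \); so for \( A = B\cap C \) with \( B \) the first type and \( C \) the second, \( \overline{\mathcal{F}\cdot\mathcal{G}}\cap\overline{B} = \overline{\mathcal{F}\cdot\mathcal{G}}\cap\overline{B}\cap\overline{C} = \overline{\mathcal{F}\cdot\mathcal{G}}\cap\overline{A}\neq\emptyset \), i.e.\ \( A\in\mathsf{Syn}(\mathcal{F},\mathcal{G}^*) \). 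Together with \( \mathsf{Syn}(\mathcal{F},\mathcal{G}^*)\subseteq\mathsf{PS}(\mathcal{F},\mathcal{G}^*) \) this gives the equality in (b). Case (c) is the same with \( X = \overline{\mathcal{F}}\cdot\overline{\mathcal{G}} \), using \cref{lemma:algebraic-characterizations}(c\('\)) and (c) — except that now it is \( \mathsf{Syn}(\mathcal{F}^*,\mathcal{G}) \) that is the ``contains'' collection and \( \mathsf{Thick}(\mathcal{F}^*,\mathcal{G}) \) that is the ``meets'' collection, so the intersection lands in \( \mathsf{Thick}(\mathcal{F}^*,\mathcal{G}) \), and one closes up with \( \mathsf{Thick}(\mathcal{F}^*,\mathcal{G})\subseteq\mathsf{PS}(\mathcal{F}^*,\mathcal{G}) \).

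For (a) and (d) I would run a parametrized version of the same argument. By \cref{lemma:algebraic-characterizations}(a\('\)) a set \( B \) is \( (\mathcal{F},\mathcal{G}) \)-syndetic iff \( \overline{\mathcal{F}}\cdot q\cap\overline{B}\neq\emptyset \) for \emph{every} \( q\in\overline{\mathcal{G}} \), while by (a) a set \( C \) is \( (\mathcal{F},\mathcal{G}) \)-thick iff \( \overline{\mathcal{F}}\cdot q_0\subseteq\overline{C} \) for \emph{some} \( q_0\in\overline{\mathcal{G}} \). Given \( A = B\cap C \) of this form, fix such a \( q_0 \) and pick \( r\in\overline{\mathcal{F}}\cdot q_0\cap\overline{B} \); then \( r\in\overline{C} \) too, so \( r\in\overline{A} \), and since \( r\in\overline{\mathcal{F}}\cdot q_0\subseteq\overline{\mathcal{F}}\cdot\overline{\mathcal{G}} \) we get \( \overline{\mathcal{F}}\cdot\overline{\mathcal{G}}\cap\overline{A}\neq\emptyset \), which is \( A\in\mathsf{Thick}(\mathcal{F}^*,\mathcal{G}) \) by \cref{lemma:algebraic-characterizations}(c); this is the inclusion in (a). For (d) I would repeat this verbatim with \( \overline{p\cdot\mathcal{G}} \) (\( p\in\overline{\mathcal{F}} \)) in place of \( \overline{\mathcal{F}}\cdot q \) (\( q\in\overline{\mathcal{G}} \)), using \cref{lemma:algebraic-characterizations}(d\('\)) and (d); the resulting membership is exactly the right-hand side displayed in (d). For the strictness claimed in (a) I would take \( (\mathbb{N},+) \) with \( \mathcal{F}=\mathcal{G}=\{\mathbb{N}\} \): here \( \mathsf{PS}(\{\mathbb{N}\},\{\mathbb{N}\}) = \mathsf{PS} \), and the finite set \( \{2\} \) is not piecewise syndetic, yet \( \{2\} \) is \( (\{\mathbb{N}\}^*,\{\mathbb{N}\}) \)-thick — take the witnessing set \( B = \{1\}\in\{\mathbb{N}\}^* \), so the only \( H\in\mathcal{P}_f(B) \) is \( \{1\} \) and \( 1^{-1}\{2\} = \{1\}\in\{\mathbb{N}\}^* \).

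Nothing in this is deep: the only genuine content is spotting the uniform pattern and keeping straight — via \cref{lemma:algebraic-characterizations} — which of \( \mathsf{Syn}(\cdot,\cdot) \), \( \mathsf{Thick}(\cdot,\cdot) \) plays the ``contains'' role and which the ``meets'' role in each substitution (it switches between (b) and (c)), and for (a) noticing that \( \mathsf{Thick}(\{S\}^*,\{S\}) \) is large enough to contain things like singletons. The mildest wrinkle is that, unlike in (b) and (c), the target collection in (d) — \( \{A : \exists\,p\in\overline{\mathcal{F}}\text{ with }\overline{p\cdot\mathcal{G}}\cap\overline{A}\neq\emptyset\} \) — is not visibly of the form \( \mathsf{Thick}(\cdot,\cdot) \), so one should not expect to upgrade (d) to an equality, just as (a) is already genuinely strict (witnessed, for \( \mathcal{F}=\mathcal{G}=\{\mathbb{N}\} \), by \( \beta\mathbb{N}\cdot\beta\mathbb{N}\supsetneq K(\beta\mathbb{N}) \)).
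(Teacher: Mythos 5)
Your proposal is correct and follows essentially the same route as the paper's proof: in each part you read off the relevant characterization from \cref{lemma:algebraic-characterizations}, decompose \( A = B \cap C \) with one factor in the ``meets'' collection and the other in the ``contains'' collection, and use \( \overline{B \cap C} = \overline{B} \cap \overline{C} \) to conclude, and your strictness witness \( \{2\} \) for part (a) is the very example the paper gives. (One minor caveat in your closing parenthetical: the exact algebraic reformulation of strictness in (a) for \( \mathcal{F} = \mathcal{G} = \{\mathbb{N}\} \) is that \( \beta\mathbb{N}\cdot\beta\mathbb{N} \not\subseteq c\ell(K(\beta\mathbb{N})) \), not merely \( \beta\mathbb{N}\cdot\beta\mathbb{N} \supsetneq K(\beta\mathbb{N}) \); this does not affect the main argument, which correctly exhibits \( \{2\} \).)
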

\begin{proof}
    \textbf{(a)} 
     Observe, via \cref{lemma:algebraic-characterizations}(c), that
    \(
        \{ A \subseteq S : (\exists q \in \overline{\mathcal{G}})\; \overline{\mathcal{F}} \cdot q \cap \overline{A} \ne \emptyset \} = \mathsf{Thick}(\mathcal{F}^*, \mathcal{G})
    \).
    
    Now let \( A \in \mathsf{PS}(\mathcal{F}, \mathcal{G}) \) and pick \( B \in \mathsf{Syn}(\mathcal{F}, \mathcal{G}) \) and \( C \in \mathsf{Thick}(\mathcal{F}, \mathcal{G}) \) as guaranteed for \( A \).
    By \cref{lemma:algebraic-characterizations}(a), pick \( q \in \overline{\mathcal{G}} \) with \( \overline{\mathcal{F}} \cdot q \subseteq \overline{C} \).
    Then
    \[
        (\overline{\mathcal{F}} \cdot q) \cap \overline{A} =  (\overline{\mathcal{F}} \cdot q) \cap \overline{B \cap C} = (\overline{\mathcal{F}} \cdot q) \cap \overline{B} \cap \overline{C} = (\overline{\mathcal{F}} \cdot q) \cap \overline{B} \ne \emptyset,
    \]
    where the last relation follows from \cref{lemma:algebraic-characterizations}(a\( ^\prime \)).
    
    To see that the inclusion can be strict, in \( (\mathbb{N}, +) \) we note
    \(
        \mathsf{PS} \subsetneq \mathsf{Thick}(\{\mathbb{N}\}^*, \{\mathbb{N}\})
    \) since, for example, \( \{2\} \in \mathsf{Thick}(\{\mathbb{N}\}^*, \{\mathbb{N}\}) \) but \( \{2\} \not\in \mathsf{PS} \).
    
  \textbf{(b)} 
  By \cref{theorem:brown-lemma-relative-piecewise-syndetic} it suffices to verify \( \mathsf{PS}(\mathcal{F}, \mathcal{G}^*) \subseteq \mathsf{Syn}(\mathcal{F}, \mathcal{G}^*) \).

  Let \( A \in \mathsf{PS}(\mathcal{F}, \mathcal{G}^*) \) and pick \( B \in \mathsf{Syn}(\mathcal{F}, \mathcal{G}^*) \) and \( C \in \mathsf{Thick}(\mathcal{F}, \mathcal{G}^*) \) as guaranteed for \( A \).
  Then
  \[
    \overline{\mathcal{F} \cdot \mathcal{G}} \cap \overline{A} = \overline{\mathcal{F} \cdot \mathcal{G}} \cap \overline{B \cap C} = \overline{\mathcal{F} \cdot \mathcal{G}} \cap \overline{B} \cap \overline{C} = \overline{\mathcal{F} \cdot \mathcal{G}} \cap \overline{B} \ne \emptyset,
  \]
  where the third equality follows from \cref{lemma:algebraic-characterizations}(b) and last relation follows from \cref{lemma:algebraic-characterizations}(b\( ^\prime \)).
  Hence by \cref{lemma:algebraic-characterizations}(b\( ^\prime \)) we have \( A \in \mathsf{Syn}(\mathcal{F}, \mathcal{G}^*) \).

    \textbf{(c)}
    By \cref{theorem:brown-lemma-relative-piecewise-syndetic} it suffices to verify \( \mathsf{PS}(\mathcal{F}^*, \mathcal{G}) \subseteq \mathsf{Thick}(\mathcal{F}^*, \mathcal{G}) \).
    
    Let \( A \in \mathsf{PS}(\mathcal{F}^*, \mathcal{G}) \) and pick \( B \in \mathsf{Syn}(\mathcal{F}^*, \mathcal{G}) \) and \( C \in \mathsf{Thick}(\mathcal{F}^*, \mathcal{G}) \) as guaranteed for \( A \).
    Then
    \[ \overline{\mathcal{F}} \cdot \overline{\mathcal{G}} \cap \overline{A} = \overline{\mathcal{F}} \cdot \overline{\mathcal{G}} \cap \overline{B \cap C} = \overline{\mathcal{F}} \cdot \overline{\mathcal{G}} \cap \overline{B} \cap \overline{C} = \overline{\mathcal{F}} \cdot \overline{\mathcal{G}} \cap \overline{C} \ne \emptyset,
    \]
    where the third equality follows from \cref{lemma:algebraic-characterizations}(c\( ^\prime \)) and last relation follows from \cref{lemma:algebraic-characterizations}(c).
  Hence by \cref{lemma:algebraic-characterizations}(c) we have \( A \in \mathsf{Thick}(\mathcal{F}^*, \mathcal{G}) \).

    \textbf{(d)}
    Let \( A \in \mathsf{PS}(\mathcal{F}^*, \mathcal{G}^*) \) and pick \( B \in \mathsf{Syn}(\mathcal{F}^*, \mathcal{G}^*) \) and \( C \in \mathsf{Thick}(\mathcal{F}^*, \mathcal{G}^*) \) as guaranteed for \( A \).
    By \cref{lemma:algebraic-characterizations}(d) pick \( p \in \overline{\mathcal{F}} \) such that \( \overline{p \cdot \mathcal{G}} \subseteq \overline{C} \).
    Then
    \[
        \overline{p \cdot \mathcal{G}} \cap \overline{A} = \overline{p \cdot \mathcal{G}} \cap \overline{B \cap C} = \overline{p \cdot \mathcal{G}} \cap \overline{B} \cap \overline{C} = \overline{p \cdot \mathcal{G}} \cap \overline{B} \ne \emptyset,
    \]
    where the last relation follows \cref{lemma:algebraic-characterizations}(d\( ^\prime \)).
\end{proof}

The point of statements (b) and (c) is that for filters \( \mathcal{F} \) and \( \mathcal{G} \), neither \( \mathsf{PS}(\mathcal{F}, \mathcal{G}^*) \) nor \( \mathsf{PS}(\mathcal{F}^*, \mathcal{G}) \) produce any new notion of size beyond relative syndetic and thick sets, respectively. 
We suspect that the inclusion in statement (d) is strict even in \( (\mathbb{N}, +) \), but we don't know of an example to prove it.
Hence from our point-of-view, statement (a) represents the main interesting new notion of size and for the rest of this section we'll restrict our attention to \( \mathsf{PS}(\mathcal{F}, \mathcal{G}) \) for filters \( \mathcal{F} \) and \( \mathcal{G} \).

Observe from \cref{lemma:algebraic-characterizations}(a) we have \(\mathsf{Thick} = \bigcup_{q \in \beta S} \mathsf{Thick}(\{S\}, q) \), and so from \cref{proposition:characterization-of-PS-as-broken-syndetic}(a) we can conclude \( A \in \mathsf{PS} \) if there exists \( q \in \beta S \) with \( A \in \mathsf{Syn}(\{S\}, \mathsf{Thick}(\{S\}, q)) \).



For a filter \( \mathcal{F} \) such that \( \overline{\mathcal{F}} \) is a closed subsemigroup of \( \beta S \), the notion of a piecewise \( \mathcal{F} \)-syndetic set was defined earlier by Shuungula, Zelenyuk, and Zelenyuk \cite[p.~534, second paragraph]{Shuungula:2009ty} as \( A \subseteq S \) is \define{piecewise \( \mathcal{F} \)-syndetic} if and only if there exists \( q \in \overline{\mathcal{F}} \) such that \( A \in \mathsf{Syn}(\mathcal{F}, \mathsf{Thick}(\mathcal{F}, q)) \).
It's not immediately clear, in this case, that the two definitions of relative piecewise syndetic sets are equivalent.
We prove, under the conditions considered in their paper, that \( A \in \mathsf{PS}(\mathcal{F}, \mathcal{F}) \) implies \( A \) is piecewise \( \mathcal{F} \)-syndetic (the converse implication is open).
We'll derive this from a more general result:
\begin{theorem}
	Let \( \mathcal{F} \) and \( \mathcal{G} \) both be filters on \( S \) with \( \overline{\mathcal{F}} \) is a closed subsemigroup of \( \beta S \) and \( \overline{\mathcal{F}} \cdot \overline{\mathcal{G}} \subseteq \overline{\mathcal{G}} \).
	If \( A \in \mathsf{PS}(\mathcal{F}, \mathcal{G}) \), then there exists \( q \in \overline{\mathcal{G}} \) with \( A \in \mathsf{Syn}(\mathcal{F}, \mathsf{Thick}(\mathcal{F}, q)) \).
\end{theorem}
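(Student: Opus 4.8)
The plan is to run the whole statement through the algebraic dictionary of \cref{lemma:algebraic-characterizations}. Beyond that lemma, the only facts used are that every ultrafilter \( q \) satisfies \( q = q^* \) (\cref{prop:mesh-operator}(f)), and that the two standing hypotheses \( \overline{\mathcal{F}} \cdot \overline{\mathcal{F}} \subseteq \overline{\mathcal{F}} \) and \( \overline{\mathcal{F}} \cdot \overline{\mathcal{G}} \subseteq \overline{\mathcal{G}} \) keep control of the sets \( \overline{\mathcal{F}} \cdot q' \) and the ultrafilters \( q' \) that arise.

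First I would unpack the hypothesis: since \( A \in \mathsf{PS}(\mathcal{F}, \mathcal{G}) \), \cref{definition:piecewise-FG-syndetic} lets us write \( A = B \cap C \) with \( B \in \mathsf{Syn}(\mathcal{F}, \mathcal{G}) \) and \( C \in \mathsf{Thick}(\mathcal{F}, \mathcal{G}) \). Applying \cref{lemma:algebraic-characterizations}(a) to \( C \), fix \( q \in \overline{\mathcal{G}} \) with \( \overline{\mathcal{F}} \cdot q \subseteq \overline{C} \); this \( q \) will be the witness in the conclusion. Next I would pin down \( \mathsf{Thick}(\mathcal{F}, q) \) and \( \mathsf{Syn}(\mathcal{F}, \mathsf{Thick}(\mathcal{F}, q)) \). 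Because \( q \) is an ultrafilter, \( q = q^* \), so \( \mathsf{Thick}(\mathcal{F}, q) = \mathsf{Thick}(\mathcal{F}, q^*) \) is a filter on \( S \) by \cref{theorem:important-filters}(b); moreover, since \( \overline{\mathcal{F}} \) is compact and \( p \mapsto p \cdot q \) is continuous, \( \overline{\mathcal{F}} \cdot q \) is a nonempty closed set, and a short computation shows its generating filter is \( \mathcal{F} \cdot q \), so that \( \overline{\mathsf{Thick}(\mathcal{F}, q)} = \overline{\mathcal{F} \cdot q} = \overline{\mathcal{F}} \cdot q \). Feeding the filter \( \mathsf{Thick}(\mathcal{F}, q) \) into \cref{lemma:algebraic-characterizations}(a\( ^\prime \)) then yields
\[
  \mathsf{Syn}(\mathcal{F}, \mathsf{Thick}(\mathcal{F}, q)) = \{ E \subseteq S : \text{for every } q' \in \overline{\mathcal{F}} \cdot q \text{ we have } \overline{\mathcal{F}} \cdot q' \cap \overline{E} \ne \emptyset \},
\]
so everything comes down to showing \( A \) lies in the right-hand side.

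To that end, fix \( q' \in \overline{\mathcal{F}} \cdot q \), say \( q' = p' \cdot q \) with \( p' \in \overline{\mathcal{F}} \). The subsemigroup hypothesis gives \( \overline{\mathcal{F}} \cdot p' \subseteq \overline{\mathcal{F}} \), so by associativity of \( \cdot \) on \( \beta S \) we get \( \overline{\mathcal{F}} \cdot q' = (\overline{\mathcal{F}} \cdot p') \cdot q \subseteq \overline{\mathcal{F}} \cdot q \subseteq \overline{C} \); and the hypothesis \( \overline{\mathcal{F}} \cdot \overline{\mathcal{G}} \subseteq \overline{\mathcal{G}} \) gives \( q' = p' \cdot q \in \overline{\mathcal{G}} \). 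Using \( \overline{A} = \overline{B} \cap \overline{C} \) together with \( \overline{\mathcal{F}} \cdot q' \subseteq \overline{C} \), we obtain \( \overline{\mathcal{F}} \cdot q' \cap \overline{A} = \overline{\mathcal{F}} \cdot q' \cap \overline{B} \), which is nonempty: indeed \( B \in \mathsf{Syn}(\mathcal{F}, \mathcal{G}) \), so \cref{lemma:algebraic-characterizations}(a\( ^\prime \)) applied to \( B \) with the point \( q' \in \overline{\mathcal{G}} \) gives \( \overline{\mathcal{F}} \cdot q' \cap \overline{B} \ne \emptyset \). Hence \( A \in \mathsf{Syn}(\mathcal{F}, \mathsf{Thick}(\mathcal{F}, q)) \), as required.

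I expect the main difficulty to be bookkeeping in this translation layer rather than an isolated hard step: one must observe that \( \mathsf{Thick}(\mathcal{F}, q) \) is genuinely a filter (so that \cref{lemma:algebraic-characterizations}(a\( ^\prime \)) applies) and correctly identify \( \overline{\mathsf{Thick}(\mathcal{F}, q)} \) with \( \overline{\mathcal{F}} \cdot q \), and then recognise that the subsemigroup hypothesis is exactly what keeps \( \overline{\mathcal{F}} \cdot q' \) inside \( \overline{C} \) while \( \overline{\mathcal{F}} \cdot \overline{\mathcal{G}} \subseteq \overline{\mathcal{G}} \) is exactly what keeps \( q' \) inside \( \overline{\mathcal{G}} \). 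Once those are in hand, the syndeticity of \( B \) closes the argument at once.
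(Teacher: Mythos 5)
Your proof is correct and takes essentially the same route as the paper: decompose \( A = B \cap C \), apply \cref{lemma:algebraic-characterizations}(a) to \( C \) to find \( q \), and use the subsemigroup hypothesis and \( \overline{\mathcal{F}} \cdot \overline{\mathcal{G}} \subseteq \overline{\mathcal{G}} \) to reduce to the syndeticity of \( B \). The one difference is that you spell out the translation layer — that \( q = q^* \) makes \( \mathsf{Thick}(\mathcal{F}, q) \) a filter with \( \overline{\mathsf{Thick}(\mathcal{F}, q)} = \overline{\mathcal{F} \cdot q} = \overline{\mathcal{F}} \cdot q \), so that \cref{lemma:algebraic-characterizations}(a\(^\prime\)) with \( \mathcal{G} = \mathsf{Thick}(\mathcal{F}, q) \) turns the conclusion into exactly the nonemptiness condition you then check — whereas the paper's proof leaves this identification implicit, ending with the display computation and tacitly asserting the conclusion.
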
 
\begin{proof}
	Pick \( B \in \mathsf{Syn}(\mathcal{F}, \mathcal{G}) \) and \( C \in \mathsf{Thick}(\mathcal{F}, \mathcal{G}) \) such that \( A = B \cap C \).
	By \cref{lemma:algebraic-characterizations}(a) pick \( q \in \overline{\mathcal{G}} \) such that \( \overline{\mathcal{F}} \cdot q \subseteq \overline{C} \).
	Since \( \overline{\mathcal{F}} \) is a subsemigroup we have \( \overline{\mathcal{F}} \cdot \overline{\mathcal{F}} \cdot q \subseteq \overline{\mathcal{F}} \cdot q \subseteq \overline{C} \).
	Since \( \overline{\mathcal{F}} \cdot \overline{\mathcal{G}} \subseteq \overline{\mathcal{G}} \) and \( B \in \mathsf{Syn}(\mathcal{F}, \mathcal{G}) \), by \cref{lemma:algebraic-characterizations}(a\( ^\prime \)) it follows that for all \( p \in \overline{\mathcal{F}} \) we have \( \overline{\mathcal{F}} \cdot p \cdot q \cap \overline{B} \ne \emptyset \).
	Hence for all \( p \in \overline{\mathcal{F}} \) we have
	\begin{align*}
		(\overline{\mathcal{F}} \cdot p \cdot q)  \cap \overline{A} &= (\overline{\mathcal{F}} \cdot p \cdot q) \cap \overline{B \cap C} \\
		&=  (\overline{\mathcal{F}} \cdot p \cdot q)  \cap \overline{B} \cap \overline{C} \\
		&= (\overline{\mathcal{F}} \cdot p \cdot q) \cap \overline{B}  \ne \emptyset.
	\end{align*}
\end{proof}
\begin{corollary}
	\label{corollary:relative-PS-union-are-thick}
	Let \( \mathcal{F} \) be a filter on \( S \) with \( \overline{\mathcal{F}} \) a closed subsemigroup of \( \beta S \).
	If \( A \in \mathsf{PS}(\mathcal{F}, \mathcal{F}) \), then there exists \( q \in \overline{\mathcal{F}} \) with \( A \in \mathsf{Syn}(\mathcal{F}, \mathsf{Thick}(\mathcal{F}, q)) \).
\end{corollary}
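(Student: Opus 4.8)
The plan is to obtain this corollary as the immediate special case \( \mathcal{G} = \mathcal{F} \) of the theorem just proved. The point is that the two hypotheses of that theorem — that \( \overline{\mathcal{F}} \) is a closed subsemigroup of \( \beta S \) and that \( \overline{\mathcal{F}} \cdot \overline{\mathcal{G}} \subseteq \overline{\mathcal{G}} \) — collapse, when \( \mathcal{G} = \mathcal{F} \), to the single condition \( \overline{\mathcal{F}} \cdot \overline{\mathcal{F}} \subseteq \overline{\mathcal{F}} \), and this is exactly what it means for the (automatically closed, since filter-generated) set \( \overline{\mathcal{F}} \) to be a subsemigroup of \( \beta S \). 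So I would simply invoke the preceding theorem with \( \mathcal{G} \) replaced by \( \mathcal{F} \): from \( A \in \mathsf{PS}(\mathcal{F}, \mathcal{F}) \) it produces some \( q \in \overline{\mathcal{F}} \) with \( A \in \mathsf{Syn}(\mathcal{F}, \mathsf{Thick}(\mathcal{F}, q)) \), which is verbatim the conclusion of the corollary.

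Because there is really nothing left to prove at the level of the corollary, the only thing I would double-check is the bookkeeping: that the inner stack \( \mathsf{Thick}(\mathcal{F}, q) \), with the ultrafilter \( q \) regarded as a stack on \( S \), is the same object that appears both in the theorem's conclusion and in Shuungula, Zelenyuk, and Zelenyuk's definition of piecewise \( \mathcal{F} \)-syndetic recalled above. Once that identification is noted, the substitution \( \mathcal{G} \mapsto \mathcal{F} \) is the entire argument.

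For completeness I would also recall, without reproving it, where the work actually sits — namely in the preceding theorem: one writes \( A = B \cap C \) with \( B \in \mathsf{Syn}(\mathcal{F}, \mathcal{F}) \) and \( C \in \mathsf{Thick}(\mathcal{F}, \mathcal{F}) \), uses \cref{lemma:algebraic-characterizations}(a) to get \( q \in \overline{\mathcal{F}} \) with \( \overline{\mathcal{F}} \cdot q \subseteq \overline{C} \), uses the subsemigroup property to absorb a factor so that \( \overline{\mathcal{F}} \cdot p \cdot q \subseteq \overline{C} \) for all \( p \in \overline{\mathcal{F}} \), then applies \cref{lemma:algebraic-characterizations}(a\(^\prime\)) to \( B \) to get \( \overline{\mathcal{F}} \cdot p \cdot q \cap \overline{B} \ne \emptyset \), and finally intersects with \( \overline{A} = \overline{B} \cap \overline{C} \) — the \( \overline{C} \) factor being free — before reading off the conclusion via \cref{lemma:algebraic-characterizations}(a) again. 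The main (and only) subtlety anywhere in this chain is the repeated appeal to \( \overline{\mathcal{F}} \cdot \overline{\mathcal{F}} \subseteq \overline{\mathcal{F}} \) to keep the relevant products inside \( \overline{\mathcal{F}} \); for the corollary this is exactly the hypothesis, so no obstacle remains.
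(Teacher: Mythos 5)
Your proposal is correct and is exactly the paper's intended argument: the corollary is the specialization \( \mathcal{G} = \mathcal{F} \) of the immediately preceding theorem, under which the two hypotheses collapse to \( \overline{\mathcal{F}} \cdot \overline{\mathcal{F}} \subseteq \overline{\mathcal{F}} \), i.e.\ the subsemigroup condition. The paper accordingly gives no separate proof for the corollary, and your recap of where the real work occurs (inside the theorem, via \cref{lemma:algebraic-characterizations}(a) and (a\(^\prime\)) together with the absorption \( \overline{\mathcal{F}} \cdot \overline{\mathcal{F}} \cdot q \subseteq \overline{\mathcal{F}} \cdot q \)) is accurate.
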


\begin{question}
	\label{question:converse-PS}
	Let \( \mathcal{F} \) be a filter on \( S \) with  \( \overline{\mathcal{F}} \)	a closed subsemigroup of \( \beta S \).
	If \( A \subseteq S \) such that there exists \( q \in \overline{\mathcal{F}} \) with \( A \in \mathsf{Syn}(\mathcal{F}, \mathsf{Thick}(\mathcal{F}, q)) \), must \( A \) be a member of \( \mathsf{PS}(\mathcal{F}, \mathcal{F}) \)?	
\end{question}

\subsection*{Acknowledgements}
\label{section:acknowledgements}

We gratefully acknowledge and thank Florian Richter for several important discussions along with valuable feedback and suggestion on earlier drafts of this article.
We thank the referee for a careful reading and several suggestions that improved the exposition.
We thank Jessica Christian for feedback on drafts of the introduction.
We thank Baglini, Blass, and Di Nasso for helpful conversations on their work related to finite embeddability, and we also thank Anush Tseurunyan and Andrew Zucker for helpful discussions and interest.
Finally, we also thank Vitaly Bergelson and Neil Hindman for helpful correspondence.

\printbibliography
\end{document}